\theoremstyle{plain}
\newtheorem{thm}{Theorem}
\newtheorem{prop}{Proposition}[section]
\newtheorem{lem}[prop]{Lemma}
\newtheorem{defi}[prop]{Definition}
\newtheorem{rmk}[prop]{Remark}
\newtheorem{example}[prop]{Example}
\newcommand {\R} {\mathbb{R}} 
 \newcommand {\N} {\mathbb{N}}
\newcommand {\p} {\partial}
\newcommand {\dt} {\partial_t}
\newcommand {\dr} {\partial_r}
\newcommand {\D} {\Delta}
\newcommand {\sgn} {\text{sgn}}
\newcommand {\supp} {\text{supp}}
\DeclareMathOperator{\vol}{vol}
\title[Unique Continuation for Sublinear Elliptic Equations]{Unique Continuation for Sublinear Elliptic Equations Based on Carleman Estimates}
\author{Angkana R\"uland}
\address{Max-Planck-Institute for Mathematics in the Sciences, Inselstr. 22, 04103 Leipzig}
\email{rueland@mis.mpg.de}
\begin{document}

\maketitle

\begin{abstract}
In this article we deal with different forms of the unique continuation property for second order elliptic equations with nonlinear potentials of sublinear growth. Under suitable regularity assumptions, we prove the weak and the strong unique continuation property. Moreover, we also discuss the unique continuation property from measurable sets, which shows that nodal domains to these equations must have vanishing Lebesgue measure. Our methods rely on suitable Carleman estimates, for which we include the sublinear potential into the main part of the operator.
\end{abstract} 

\section{Introduction}

This article is devoted to unique continuation properties for second order elliptic equations with \emph{sublinear potentials}. The unique continuation property for second order elliptic equations has a long tradition and many important ramifications. In a sense, it quantifies the rigidity of solutions to these equations. 

More precisely, the \emph{(weak) unique continuation property} (WUCP) addresses the question of whether if a given solution $u$ to an equation $Lu=0$ in an open set $\Omega \subset \R^n$ vanishes in an open subset of $\Omega$, i.e. $u= 0$ in $B_{r}(x_0)\subset \Omega$, this already implies that $u$ vanishes globally in $\Omega$, i.e. whether $u\equiv 0$ in $\Omega$. 

Similarly, it is also possible to ask whether the \emph{vanishing of infinite order} at a point $x_0\in \Omega$ of a solution $u$ to $Lu=0$ in $\Omega$, i.e. whether the assumption that
\begin{align*}
\lim\limits_{r\rightarrow 0} r^{-m}\int\limits_{B_r(x_0)} u^2 dx = 0 \mbox{ for all } m \in \N,
\end{align*}
already implies the global vanishing of $u$ in $\Omega$. This property is referred to as the \emph{strong unique continuation property} (SUCP). 

In the context of nodal domain estimates of eigenfunctions to certain operators, also the \emph{unique continuation property from measurable sets} (MUCP) plays an important role, as it asserts that a solution $u$ of $Lu=0$ in $\Omega$, which vanishes on a set $E\subset \Omega$ of positive measure, already vanishes globally. In particular, if the MUCP holds for an equation, it implies that its nodal set has vanishing Lebesgue measure.

Prototypical examples of functions which satisfy all of these properties are harmonic functions (since they are analytic). However, this property holds for a much larger class of second order elliptic operators
\begin{align}
\label{eq:L}
Lu = \p_i a^{ij} \p_j u + W_{1,i} \p_i u + \p_i (W_{2,i} u) + V u
\end{align}
under suitable regularity assumptions on the uniformly elliptic metric $a^{ij}:\Omega \rightarrow \R^{n\times n}_+$, the gradient potentials $W_{1,i}, W_{2,i}: \Omega \rightarrow \R$ and the $L^2$ potential $V: \Omega \rightarrow \R$.
The setting of linear second order equations is by now quite well understood: Based on first works due to Carleman \cite{C39}, \emph{Carleman estimates} were developed as a major tool of addressing these problems. After early important results due to Aronszajn-Krzywicki-Szarski \cite{AKS62}, some of the seminal contributions in this context include the work due to Chanillo-Sawyer \cite{CS90}, Kenig-Ruiz-Sogge \cite{KRS87}, Jerison-Kenig \cite{JK85}, who deal with scaling critical potentials in different function spaces, e.g. $V\in L^{\frac{n}{2}}$, Wolff \cite{W90,W92}, who introduced osculation arguments in order to overcome intrinsic difficulties with Carleman estimates for low regularity gradient potentials (i.e. for potentials close to the critical space $W_{1,i}, W_{2,i} \in L^{n}$), c.f. \cite{J86}. Finally, Koch and Tataru \cite{KT01, KT05} showed how to combine Lipschitz continuous metrics $a^{ij}$ with critical function spaces for the gradient potentials $W_{1,i}, W_{2,i}$ and the $L^2$ potential $V$. Counterexamples \cite{P62, M74, M98, KT02, KN00, W92a, W94} show that both in the weak and the strong unique continuation setting these assumptions are essentially sharp. For a more extensive overview on the vast literature on unique continuation properties for second order elliptic equations, we refer to the survey article \cite{W93} and the above cited articles.

A second line of thought was introduced by Garofalo and Lin \cite{GL86, GL87}, who proved similar unique continuation results by means of variational arguments. Their main tool, which also found numerous applications in other variational problems such as for instance free boundary value problems, is a so-called \emph{frequency function}. This is used to measure the growth of solutions to \eqref{eq:L} away from its nodal set.

With the afore mentioned results at hand, it is also possible to study the unique continuation properties of a quite general class of second order \emph{semilinear} elliptic equations. Since in the linear theory the lower order terms (including the potentials $W_{1,i}, W_{2,i}, V$) are usually treated perturbatively, it is in particular possible to deal with equations with \emph{superlinear} potentials, the model problem being given by the equation
\begin{align*}
(-\D) u = |u|^{q-2}u, \ u \in L^{\infty}_{loc}(\Omega)\cap H^1_{loc}(\Omega), \ q\geq 2.
\end{align*}
Indeed, in this case, by setting $V= |u|^{q-2}$ and by using the assumed $L^{\infty}_{loc}$ property of $u$ (which is hence inherited by $V$), we may rewrite the problem in the form \eqref{eq:L}. 

Similarly, one can however also wonder whether analogous properties hold for \emph{sublinear} potentials, i.e. whether for instance solutions to the equation
\begin{align}
\label{eq:mod1}
(-\D) u = |u|^{q-2}u, \ u \in L^{\infty}_{loc}(\Omega)\cap H^1_{loc}(\Omega), \ q \in (1,2),
\end{align}
satisfy the various unique continuation properties from above. Here however, the setting changes -- not only because the previous reasoning of simply defining $V=|u|^{q-2}$ fails, since, in general, with this definition, the potential $V$ need no longer be a function in the space $L^{\frac{n}{2}}$. Indeed, in the \emph{sublinear} regime there are intrinsic difficulties which have to be overcome: Already when studying the related ODEs
\begin{align}
\label{eq:mod11}
u''=f_q(u), 
\end{align}
where $|f_q(u)|$ is bounded from below by $|u|^{q-1}$ for $q\in [1,2)$, one observes that in general the unique continuation property fails. For instance, a computation shows that for any $t_0\in \R$ the function
\begin{align*}
u(t)= \left\{ \begin{array}{ll}
\left(\frac{2q}{(2-q)^2}\right)^{\frac{1}{q-2}}(t-t_0)^{\frac{2}{2-q}} \mbox{ for } t>t_0,\\
0 \mbox{ for } t \leq t_0,
\end{array} \right.
\end{align*}
is a solution to the equation
\begin{align*}
u'' = |u|^{q-2}u, \ q\in(1,2).
\end{align*}

Motivated by the study of nonlinear eigenvalue problems, the analysis of the corresponding nodal domains \cite{PW15} and the relation of these problems to porous media type equations \cite{V07}, in a recent article Soave and Weth \cite{SW17} however observed that the right choice of the sign of the nonlinearity in \eqref{eq:mod11} allows one to recover the WUCP. For instance, direct energy methods and ODE arguments show that the solutions to the equation
\begin{align*}
u''=-|u|^{q-2}u, \ q \in (1,2)
\end{align*}
satisfy the UCP (these arguments even show that all zeros of $u$ must be simple zeros).
More generally, in arbitrary dimensions, Soave and Weth \cite{SW17} prove that considering correctly signed equations modelled on the problem 
\begin{align}
\label{eq:model0}
\begin{split}
-\D u = f_q(u) \mbox{ with } 
f_q(u) = \left\{
\begin{array}{ll}
|u|^{q-2}u \mbox{ if } q \in (1,2),\\
\sgn(u) \mbox{ if } q = 1,
\end{array}
\right.
\end{split}
\end{align}
it is possible to prove the WUCP. To this end, they adapt the frequency function techniques due to Garofalo and Lin \cite{GL86,GL87} and 
Garofalo and Smit Vega Garcia \cite{GSVG14}. In their work it however remained open, whether the SUCP and the MUCP hold. In particular, the corresponding estimates on the nodal domains for general sign changing solutions from \cite{PW15} remained open. 

In this article we address the unique continuation property for these equations by applying Carleman techniques. In particular, under suitable assumptions on the nonlinear potential, we also derive the SUCP and the MUCP, thus settling the question from \cite{PW15} (at least for $q\in (1,2)$).

\subsection{The results}
Let us discuss the precise results:
In order to motivate the problem and the ideas without having to deal with an additional layer of technicalities, in Section \ref{sec:model} we first address the \emph{constant coefficient setting} and explain the main ideas of our argument for a \emph{model situation}. Then, in Section \ref{sec:results_var} we generalize these results to \emph{variable coefficient equations} with more \emph{general sublinear potentials}. We remark that, as already observed by Soave-Weth \cite{SW17}, in both cases the sign of the sublinear nonlinearity is crucial. 

\subsubsection{The model case}
\label{sec:model}
In the sequel, as a model problem we consider a slight generalization of \eqref{eq:model0}. We seek to prove that solutions to this equation posses the (strong) unique continuation property as well as the unique continuation property from measurable sets:

\begin{thm}
\label{thm:SMUCP}
Let $\Omega \subset \R^n$ be open and let $x_0 \in \Omega$. Let $V\in L^{\infty}(\Omega)$. Suppose that $u\in H^1_{loc}(\Omega)\cap L^{\infty}_{loc}(\Omega)$ is a solution to
\begin{align}
\label{eq:model}
\D u + f_q(u)= V u \mbox{ in } \Omega \mbox{ with } 
f_q(u) = \left\{
\begin{array}{ll}
|u|^{q-2}u \mbox{ if } q \in (1,2),\\
\sgn(u) \mbox{ if } q = 1,
\end{array}
\right.
\end{align}
and assume that one of the following conditions holds:
\begin{itemize}
\item[(a)] $q\in [1,2)$ and there exists a radius $r_0>0$ such that $u$ vanishes on $B_{r_0}(x_0)\subset \Omega $.
\item[(b)] $q\in (1,2)$ and $u$ vanishes of infinite order at $x_0\in \Omega$, i.e. for any $m\in \N$ we have $\lim\limits_{r\rightarrow 0} r^{-m} \int\limits_{B_r(x_0)} u^2 dx = 0 $.
\item[(c)] $q\in (1,2)$ and $u$ vanishes on a measurable set of positive measure, i.e. there exists $E \subset \Omega$ such that $|E|>0$ and $u|_{E}=0$.
\end{itemize}
Then, $u \equiv 0$ in $\Omega$.
\end{thm}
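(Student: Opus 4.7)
The strategy is to derive all three cases from a single \emph{Carleman estimate} for the nonlinear operator $u\mapsto \D u + f_{q}(u)$, in which the sublinear term is absorbed into the \emph{principal} part of the operator rather than rewritten as an unbounded linear potential $V_{\mathrm{eff}}=|u|^{q-2}$. Given $x_{0}\in\Omega$ and a radial pseudoconvex weight $\psi(x)=\psi(|x-x_{0}|)$ (say a convex perturbation of $\psi(r)=-\log r$), the goal is
\begin{align*}
\tau^{3}\int e^{2\tau\psi}u^{2}\, dx + \tau\int e^{2\tau\psi}|\nabla u|^{2}\, dx + \tau\int e^{2\tau\psi}|u|^{q}\, dx \leq C\int e^{2\tau\psi}\bigl|\D u + f_{q}(u)\bigr|^{2}\, dx,
\end{align*}
valid for $u\in C^{\infty}_{c}(B_{R}(x_{0})\setminus\{x_{0}\})$ and $\tau\geq\tau_{0}$. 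The decisive new feature is the $\tau\int e^{2\tau\psi}|u|^{q}$ term on the left, which, via a Young inequality against $|u|^{2q-2}$, absorbs the square of the source $f_{q}(u)^{2}=|u|^{2q-2}$ without any smallness assumption on $\|u\|_{L^{\infty}}$.

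To derive this estimate I would start from the classical Carleman inequality for $\D$ with weight $e^{\tau\psi}$ and then expand $|\D u + f_{q}(u)|^{2} = (\D u)^{2} + 2\,\D u\, f_{q}(u) + f_{q}(u)^{2}$. The cross term is integrated by parts by means of the identities $\D u\, f_{q}(u) = \nabla\cdot(f_{q}(u)\nabla u) - (q-1)|u|^{q-2}|\nabla u|^{2}$ and $f_{q}(u)\nabla u = \nabla F_{q}(u)$, with $F_{q}(u)=q^{-1}|u|^{q}$. A second integration by parts converts the resulting boundary-type contribution into $\tfrac{2\tau}{q}\int e^{2\tau\psi}\bigl(2\tau|\nabla\psi|^{2}+\D\psi\bigr)|u|^{q}$, which is of order $\tau^{2}\int e^{2\tau\psi}|u|^{q}$ under pseudoconvexity. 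The apparently awkward term $(q-1)|u|^{q-2}|\nabla u|^{2}$ enters with its natural, \emph{nonnegative} sign and can be interpolated between the $|\nabla u|^{2}$ and $|u|^{q}$ contributions already present on the left. Crucially, this works only because $f_{q}$ has the sign dictated by the equation --- the same sign that drives the variational and frequency-function arguments of \cite{SW17} --- the opposite sign would destroy the inequality, in accordance with the ODE counterexample recalled in the introduction.

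With the Carleman estimate in hand, cases (a) and (b) follow by standard arguments. For (a) one applies the estimate to $\chi u$, with $\chi$ a radial cutoff supported outside $B_{r_{0}/2}(x_{0})$ and equal to one on $B_{r_{1}}$ for some $r_{1}>r_{0}$ with $B_{r_{1}}\subset\Omega$. Since $u\equiv 0$ on $B_{r_{0}}$, both the commutator $[\D,\chi]u$ and the contribution from $Vu$ are supported in an annulus on which the Carleman weight is strictly smaller than on $B_{r_{1}}$; letting $\tau\to\infty$ forces $u\equiv 0$ on a ball strictly larger than $B_{r_{0}}$, and a chaining argument extends the vanishing throughout $\Omega$. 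For (b), the estimate is applied to $u$ on a small ball $B_{R}(x_{0})$ after a further cutoff that excises $x_{0}$; the infinite-order vanishing hypothesis makes the boundary-type terms at $x_{0}$ vanish in the limit, so the same absorption mechanism yields $u\equiv 0$ on some small ball and thereby reduces the problem to~(a).

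Case (c) is the most delicate step, and I would reduce it to~(b) via the Lebesgue density theorem. At almost every $x_{0}\in E$ one has $|E\cap B_{r}(x_{0})|/|B_{r}(x_{0})|\to 1$ as $r\to 0^{+}$, so that $u$ vanishes on ``most'' of each small ball. Combining this density property with the $L^{\infty}$ bound on $u$ and a quantitative three-balls consequence of the Carleman estimate above shows that $u$ must vanish of infinite order at any such density point, whereupon case~(b) finishes the argument. The principal obstacle throughout is the Carleman estimate itself: one must verify that the singular factor $|u|^{q-2}$ emerging from the integration by parts does not destroy the inequality, and this rests entirely on the correct sign of $f_{q}$ together with the $L^{\infty}$ control of $u$, rather than on any $L^{n/2}$-theory for a potential that generically fails to lie in that space.
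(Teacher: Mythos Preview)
Your overall strategy is exactly the paper's: prove a Carleman estimate in which the sublinear term is treated as part of the principal operator, then run the standard cutoff arguments for (a) and (b) and reduce (c) to (b) via Lebesgue density. The applications you sketch for (a)--(c) are correct and match the paper (for (c) the paper does not go through a three-balls inequality but instead combines H\"older, Sobolev, and a Caccioppoli estimate to get $\|u\|_{L^{2}(B_{r})}\le C\epsilon^{1/n}\|u\|_{L^{2}(B_{2r})}$ directly; your route via three-balls would also work).

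There is, however, a genuine gap in your derivation of the Carleman inequality. In your expansion the cross term produces
\[
2\int e^{2\tau\psi}\Delta u\,f_{q}(u)\,dx=\frac{4\tau}{q}\int e^{2\tau\psi}\bigl(2\tau|\nabla\psi|^{2}+\Delta\psi\bigr)|u|^{q}\,dx\;-\;2(q-1)\int e^{2\tau\psi}|u|^{q-2}|\nabla u|^{2}\,dx,
\]
so the second term enters the \emph{lower} bound for $\int e^{2\tau\psi}|\Delta u+f_{q}(u)|^{2}$ with a \emph{negative} sign, not a favourable one. It cannot be ``interpolated between the $|\nabla u|^{2}$ and $|u|^{q}$ contributions'': the factor $|u|^{q-2}$ blows up where $u$ is small, so no Young-type splitting against $c\tau\int e^{2\tau\psi}|\nabla u|^{2}$ and $c\tau^{2}\int e^{2\tau\psi}|u|^{q}$ will absorb it, and the $L^{\infty}$ bound on $u$ gives only a \emph{lower} bound on $|u|^{q-2}$.

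The paper avoids this term entirely by a different bookkeeping. Writing the equation as $\Delta u+h_{q}(u)\,u=g$ with $h_{q}(u)=|u|^{q-2}$ treated as a \emph{fixed multiplication operator}, one conjugates and splits $L_{\varphi}=S+A$ with $h_{q}$ placed in the \emph{symmetric} part $S$. Then the nonlinear contribution to the commutator is simply
\[
([h_{q},A]v,v)=2\tau\int \varphi'\,\partial_{t}\!\bigl(|u|^{q-2}\bigr)\,v^{2}
=\frac{2(2-q)}{q}\,\tau\int e^{2\tau\psi}\bigl(2\tau|\nabla\psi|^{2}+\Delta\psi\bigr)|u|^{q},
\]
obtained by using $v^{2}=e^{2\tau\psi}u^{2}$ and $|u|^{q-2}u\,\partial u=\partial(\tfrac{1}{q}|u|^{q})$; no $|u|^{q-2}|\nabla u|^{2}$ term ever appears. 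In effect, the quantity you are trying to control is hidden inside $\|Sv\|^{2}\ge 0$, and by starting from the classical Carleman for $\Delta$ you have already discarded exactly the piece ($\|S_{0}v\|^{2}$) needed to compensate it. Reorganising your argument so that $|u|^{q-2}$ sits in $S$ from the outset closes the gap.
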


We remark that the sign of $f_q(u)$ is crucial and that any form of the UCP is false if the sign of $f_q(u)$ were to be reversed (c.f. the discussion of ODE examples from above).

We prove the results of Theorem \ref{thm:SMUCP} by using Carleman estimates. In order to distinguish the setting with a favourable sign for the nonlinearity from the setting with an unfavourable sign, we include the nonlinearity into the leading part of the operator. This is in contrast to the situation of superlinear potentials, which one would typically treat perturbatively. 

\begin{thm}
\label{thm:sub_lin_Carl}
Let $q\in [1,2)$.
Let $u\in H^1_{loc}(\R^n)\cap L^{\infty}_{loc}(\R^n)$ be a solution of 
\begin{align}
\label{eq:eq_main}
\begin{split}
\D u + f_q(u) = g \mbox{ in } \R^n
\end{split}
\end{align}
with support contained in $B_1\setminus \overline{B_{r_1}}$ for some $r_1 \in (0,1/2)$.
Define $\phi(x):= \psi(|x|)$ to be
\begin{align*}
\psi(r) = -\ln(r) + \frac{1}{10}\left( \ln(r) \arctan(\ln(r))- \frac{1}{2} \ln(1+ \ln^2(r))  \right).
\end{align*}
Then there exist constants $\tau_0>1$ and $C>1$ (which only depend on $n,q$) such that for all $\tau \geq \tau_0$ we have
\begin{align*}
&\tau^{3/2} \|e^{\tau \phi} (1+\ln^2(|x|))^{-1/2} u\|_{L^2(\R^n)}
+ \tau^{1/2} \|e^{\tau \phi} |x| (1+\ln^2(|x|))^{-1/2} \nabla u \|_{L^2(\R^n)}\\
&+\left(\frac{q-2}{q}\right)^{1/2}\tau \|e^{\tau \phi} |x||u|^{\frac{q}{2}}\|_{L^2(\R^n)}
\leq C \|e^{\tau \phi}|x|^2 g\|_{L^2(\R^n)}.
\end{align*}
\end{thm}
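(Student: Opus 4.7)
My plan is to use the positive commutator method, but with the sublinear nonlinearity \emph{included in the main part of the operator} rather than perturbatively, exploiting the favourable sign $f_q(u)u = |u|^q \geq 0$.

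Since the weight is radial, I would first pass to polar--cylindrical coordinates $t = -\ln|x|$, $\theta = x/|x|$ and perform the conformal rescaling $\tilde u(t,\theta) = |x|^{(n-2)/2}u(x)$. This turns $|x|^2\Delta u$ into the cylindrical operator $\mathcal{L} = \p_t^2 + \Delta_\theta - \tfrac{(n-2)^2}{4}$ (acting on $\tilde u$, up to an explicit Jacobian factor which accounts for the $|x|^2$ appearing on the right-hand side of the estimate). In the new variables, $\phi(x) = \psi(|x|)$ becomes $\phi(t) = t + \tfrac{1}{10}\bigl(t\arctan t - \tfrac{1}{2}\ln(1+t^2)\bigr)$, whose two crucial derivatives are
\begin{align*}
\phi'(t) = 1 + \tfrac{1}{10}\arctan t \in [1-\tfrac{\pi}{20},\,1+\tfrac{\pi}{20}], \qquad \phi''(t) = \tfrac{1}{10(1+t^2)} > 0.
\end{align*}
The uniform positivity of $\phi''$ is precisely the source of the $(1+\ln^2|x|)^{-1/2}$ weights and ensures pseudoconvexity at every scale.

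Next I would set $w = e^{\tau\phi}\tilde u$ and split the conjugated operator $\mathcal{L}_\tau = e^{\tau\phi}\mathcal{L}e^{-\tau\phi}$ into its symmetric and antisymmetric parts, $S = \p_t^2 + \Delta_\theta + \tau^2(\phi')^2 - \tfrac{(n-2)^2}{4}$ and $A = -2\tau\phi'\p_t - \tau\phi''$. A direct computation, followed by an integration by parts in $t$, yields
\begin{align*}
\langle [S,A]w, w\rangle = 4\tau\!\int \phi'' |\p_t w|^2 + 4\tau^3\!\int (\phi')^2\phi''\,|w|^2 - \tau\!\int \phi''''\,|w|^2,
\end{align*}
the first two terms being strictly positive and producing exactly the $\tau^{3/2}\|e^{\tau\phi}(1+\ln^2|x|)^{-1/2}u\|$ and $\tau^{1/2}\|e^{\tau\phi}|x|(1+\ln^2|x|)^{-1/2}\p_r u\|$ contributions after translating back to $\R^n$. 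The $\phi''''$ term is a controllable lower-order error. The angular part of $\nabla u$ can then be absorbed by adding a small multiple of $\|Sw\|^2$ to the estimate or by reinserting the equation.

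The genuinely new step, and the reason the sign matters, is the nonlinear cross term $2\langle e^{\tau\phi}f_q(\tilde u),\, Aw\rangle$. Using the identity $f_q(\tilde u)\p_t\tilde u = q^{-1}\p_t(|\tilde u|^q)$ and integrating by parts,
\begin{align*}
2\langle e^{\tau\phi}f_q(\tilde u), Aw\rangle = \tfrac{2(2-q)}{q}\tau^2\!\int e^{2\tau\phi}(\phi')^2|\tilde u|^q\,dt\,d\theta + \tfrac{2(2-q)}{q}\tau\!\int e^{2\tau\phi}\phi''|\tilde u|^q\,dt\,d\theta,
\end{align*}
both contributions having the good sign for $q\in(1,2)$ because $\phi' \geq 1-\pi/20 > 0$. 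Translating back, the leading term yields the nonlinear control $\tfrac{2-q}{q}\tau^2\|e^{\tau\phi}|x||u|^{q/2}\|_{L^2}^2$ appearing on the left-hand side of the theorem.

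The assembly is then standard: expand $\|Sw+Aw+e^{\tau\phi}f_q(\tilde u) - e^{\tau\phi}g\|^2 = 0$, collect the two positive cross terms computed above, discard $\|Sw\|^2+\|Aw\|^2 + \|e^{\tau\phi}f_q(\tilde u)\|^2 \geq 0$ where favourable, and use Cauchy--Schwarz on the $g$-cross terms, absorbing a small multiple of each left-hand-side norm. The main obstacle I expect is twofold: first, verifying that the log-log correction in $\psi$ is \emph{just strong enough} to make $\phi''$ strictly positive everywhere while not spoiling the leading $-\ln r$ behaviour (this determines the precise constant $\tfrac{1}{10}$ and the absorption of the $\phi''''$-term); second, justifying the integration-by-parts identity $f_q(\tilde u)\p_t\tilde u = q^{-1}\p_t|\tilde u|^q$ in the limiting case $q=1$, where $f_1(u) = \sgn(u)$ is discontinuous --- here one approximates and uses the fact that $\nabla \tilde u = 0$ a.e.\ on $\{\tilde u = 0\}$ to pass to the limit.
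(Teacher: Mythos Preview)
Your proposal is correct and follows essentially the same route as the paper: conformal polar coordinates with the rescaling $|x|^{(n-2)/2}u$, the $S+A$ splitting with $h_q(\tilde u)$ absorbed into the symmetric part, the commutator bound from $\phi''>0$, the key integration by parts $f_q(u)\p_t u = q^{-1}\p_t|u|^q$ giving the favourable $(2-q)/q$ sign, and the recovery of the spherical gradient from $\|Sw\|^2$. The only details you have suppressed are the $e^{2t}$ Jacobian factor on the nonlinearity and the distinction between $u$ and its conformal rescaling when applying $f_q$; these generate extra lower-order terms (e.g.\ the $(4-n)\varphi'$ contribution in the paper's function $l'(t)$) that are absorbed for large $\tau$, so they do not affect the argument.
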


\subsubsection{Variable coefficients and more general sublinear terms}
\label{sec:results_var}
More generally, with similar arguments, it is possible to treat the setting of more general nonlinearities $f_q(x,u)$ and equations, which involve Lipschitz metrics. In the sequel, we describe the assumptions precisely. For the metric $a^{ij}: \Omega \rightarrow \R^{n\times n}$ we assume that:
\begin{itemize}
\item[(A1)] The metric is uniformly elliptic, i.e. there exist constants $0< \lambda \leq 1 \leq \Lambda< \infty$ such that
\begin{align*}
\lambda |\xi|^2 \leq a^{ij}(x)\xi_i \xi_j \leq \Lambda |\xi|^2.
\end{align*} 
\item[(A2)] The metric is Lipschitz continuous, i.e. there exists a constant $\Lambda_0>1$ such that
\begin{align*}
|a^{ij}(x)-a^{ij}(y)|\leq \Lambda_0 |x-y| \mbox{ for } x,y \in \Omega.
\end{align*}
\item[(A3)] The metric is normalized, i.e. we assume that $0 \in \Omega$ and that $a^{ij}(0)=\delta_{ij}$, where $\delta_{ij}$ denotes the Kronecker symbol.
\end{itemize}
We remark that the assumption (A3) can always be imposed without any loss of generality, as it can always be achieved by a suitable affine change of coordinates.

Compared to the setting in \cite{SW17}, we in the sequel require stronger differentiability conditions for the nonlinearity. More precisely, for $q\in [1,2)$ we impose the following conditions:
\begin{itemize}
\item[(F1)] $0<s f_q(x,s)\leq q F_q(x,s)$ for $s\in (-\epsilon_0, \epsilon_0)\setminus \{0\}$, $x \in \Omega$, where $\epsilon_0>0$ is an arbitrary but fixed constant and where $f_q\in L^{\infty}_{loc}(\Omega \times \R)$ and $F_q: \Omega \times \R \rightarrow \R $ denotes its primitive, i.e. $F_q(x,s)=\int\limits_{0}^s f_q(x,t)dt$. \\
Additionally, we assume that $f_q(x,0)=0$ for all $x\in \Omega$.
\item[(F2)] There exists $\kappa_2>0$ such that $F_q(x,s)\geq \kappa_2$ for all $x\in \Omega$, $s\in \{-\epsilon_0, \epsilon_0\}$.
\item[(F3)] For every $s\in (-\epsilon_0, \epsilon_0)$ the functions $f_q(\cdot,s)$ and $F_q(\cdot, s)$ are $C^1$ on $\Omega$ and
there exists $\kappa_1>0$ such that
\begin{align*}
|\nabla_x F_q(x,s)|&\leq \kappa_1 F_q(x,s) \mbox{ for all } x \in \Omega, \ s \in (-\epsilon_0,\epsilon_0),\\
|\nabla_x f_q(x,s)|&\leq \kappa_1 |f_q(x,s)| \mbox{ for all } x \in \Omega, \ s \in (-\epsilon_0, \epsilon_0).
\end{align*}
\item[(F4)] For all $x\in \Omega$ we have $f_q(x,\cdot)\in C^1((-\epsilon_0,\epsilon_0)\setminus \{0\})$ and the function $(x,s) \mapsto g_q(x,s):=s\p_s f_q(x,s)$ is well-defined with $g_q\in L^{\infty}(\Omega \times (-\epsilon_0,\epsilon_0))$. 
\end{itemize}
As in \cite{SW17} we remark that the condition (F1) implies that the function $s\mapsto \frac{F_q(x,s)}{|s|^q}$ is non-increasing on $(0,\epsilon_0)$, while it is non-decreasing on $(-\epsilon_0,0)$. In particular, combined with the condition (F2), it provides a lower bound of the form
\begin{align*}
F_{q}(x,s) \geq \frac{\min\{F_q(x,\epsilon_0), F_q(x,-\epsilon_0)\}}{\epsilon_0^q}|s|^q \geq \frac{\kappa_2}{\epsilon_0^q} |s|^q.
\end{align*}
To simplify notation, we introduce the following abbreviations, which we will use frequently in the sequel:
\begin{align*}
\hat{f}_q(u)(x):=f_q(x,u(x)),\ \hat{F}_q(u)(x):=F_q(x,u(x)).
\end{align*}

In order to derive the strong unique continuation property and the unique continuation property from measurable sets, we in addition also make the following assumption:
\begin{itemize}
\item[(F5)] There exists $p\in (1,2)$ and $\kappa_3>0$ such that $|f_q(x,s)|\leq \kappa_3 |s|^{p-1}$ for $x\in \Omega$ and $s\in (-\epsilon_0, \epsilon_0)$.
\end{itemize}
In particular, by the definition of $F_q$ the condition (F5) also entails that
\begin{align}
\label{eq:F5_1}
|F_q(x,s)| \leq \kappa_3 |s|^{p} \mbox{ for all } x \in \Omega, \ s \in (-\epsilon_0,\epsilon_0).
\end{align}

\begin{example}
As in \cite{SW17} we remark that an example of an equation for which the conditions (A1)-(A3) and (F1)-(F5) are satisfied is for instance given by
\begin{align*}
\D u + \sum\limits_{j=1}^{m} c_j(x) |u|^{q_j-2}u = V u,
\end{align*}
where $q_j \in (1,2)$, $c_j, \nabla c_j \in L^{\infty}(\Omega)$ with $c_j>0$, $V \in L^{\infty}(\Omega)$ and $m\in \N$. In particular, the function $f_q(x,s)$ need not have a fixed power growth in $s$, but could for instance consist of a sum of different powers. 
\end{example}

Under these conditions, we then study a variable coefficient analogue of the model problem \eqref{eq:model} and prove that analogous unique continuation properties hold:

\begin{thm}
\label{thm:SMUCP_var}
Let $\Omega \subset \R^n$ be open, let $x_0 \in \Omega$ and let $V\in L^{\infty}(\Omega)$.
Let $u\in H^1_{loc}(\Omega)\cap L^{\infty}_{loc}(\Omega)$ be a solution of 
\begin{align}
\label{eq:model_1}
\p_i a^{ij}\p_j u + \hat{f}_q(u)= V u \mbox{ in } \Omega, 
\end{align}
where the conditions (A1)-(A3) and (F1)-(F4) are assumed to be valid.
Suppose further that one of the following conditions holds:
\begin{itemize}
\item[(a)] $q\in [1,2)$ and there exists a radius $r_0>0$ such that  $u$ vanishes on $B_{r_0}(x_0)\subset \Omega$ for some $x_0 \in \Omega$.
\item[(b)] $q\in(1,2)$, the condition (F5) is satisfied and $u$ vanishes of infinite order at $x_0\in \Omega$, i.e. for any $m\in \N$ we have $\lim\limits_{r\rightarrow 0} r^{-m} \int\limits_{B_r(x_0)} u^2 dx = 0 $.
\item[(c)] $q\in (1,2)$ and $u$ vanishes on a measurable set of positive measure, i.e. there exists $E \subset \Omega$ such that $|E|>0$ and $u|_{E}=0$, and the condition (F5) is satisfied.
\end{itemize}
Then, $u \equiv 0$ in $\Omega$.
\end{thm}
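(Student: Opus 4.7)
The plan is to mimic the proof of Theorem \ref{thm:SMUCP}, reducing each of the three unique continuation statements to a variable-coefficient analogue of the Carleman estimate in Theorem \ref{thm:sub_lin_Carl}. First I would establish such an estimate: under (A1)-(A3) and (F1)-(F4), I would show that there is a radius $r_* > 0$ (depending on the Lipschitz data of $a^{ij}$) so that whenever $u \in H^1_{loc}(\R^n) \cap L^\infty_{loc}(\R^n)$ is supported in $B_{r_*} \setminus \overline{B_{r_1}}$ and satisfies $\p_i a^{ij} \p_j u + \hat{f}_q(u) = g$, one has
\begin{align*}
&\tau^{3/2} \|e^{\tau\phi}(1+\ln^2|x|)^{-1/2} u\|_{L^2(\R^n)} + \tau^{1/2} \|e^{\tau\phi}|x|(1+\ln^2|x|)^{-1/2} \nabla u\|_{L^2(\R^n)} \\
&\qquad + c_q \tau \|e^{\tau\phi}|x| \hat{F}_q(u)^{1/2}\|_{L^2(\R^n)} \leq C \|e^{\tau\phi}|x|^2 g\|_{L^2(\R^n)},
\end{align*}
with the same weight $\phi$ as in Theorem \ref{thm:sub_lin_Carl} and some $c_q>0$. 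The algebraic identities for $|u|^{q-2}u$ used in the model case are replaced by (F1)-(F4): (F1) provides the favorable sign $sf_q(x,s)>0$ and the convexity-type bound $sf_q\le qF_q$, which after integration by parts produce the positive $\hat{F}_q^{1/2}$ contribution; (F3) ensures that commutators involving $\nabla_x \hat{f}_q$ and $\nabla_x \hat{F}_q$ are controlled by $\hat{F}_q$ itself; and (F4) is needed when differentiating the nonlinearity against the conjugated derivatives. Since $a^{ij}(0)=\delta_{ij}$ and $a^{ij}$ is Lipschitz, $a^{ij}-\delta_{ij}=O(|x|)$, so on $B_{r_*}$ with $r_*$ small the principal part is a controllable perturbation of the flat Laplacian whose contribution is absorbed by the gradient term on the left.

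Once this estimate is available, case (a) follows as in the proof of Theorem \ref{thm:SMUCP}. Translating so $x_0=0$, I would multiply $u$ by a radial cut-off $\eta$ that equals $1$ on $\{r_0\le|x|\le R\}$ for some $R\in(r_0,r_*)$ and vanishes outside $\{r_0/2\le|x|\le R+\delta\}$. Then $\eta u$ solves $\p_i a^{ij}\p_j(\eta u)+\hat{f}_q(\eta u)=\eta V u + E$, where $E$ collects the metric commutator $[\p_i a^{ij}\p_j,\eta]u$ and the nonlinear mismatch $\hat{f}_q(\eta u)-\eta\hat{f}_q(u)$. Because $u\equiv 0$ on $B_{r_0}$ and $f_q(x,0)=0$ by (F1), the entire $E$ is supported in the outer annulus $\{R\le|x|\le R+\delta\}$. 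For $\tau$ large depending on $\|V\|_{L^\infty}$, the term $\eta V u$ is absorbed into the $\tau^{3/2}$ summand on the left, and since $\phi$ is strictly decreasing in $|x|$, the weight at $|x|=R$ is strictly smaller than on any $B_r$ with $r<R$. Letting $\tau\to\infty$ forces $u\equiv 0$ on $B_r$ for every $r<R$; a standard chain-of-balls argument then extends the vanishing throughout the connected component of $\Omega$ containing $x_0$.

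For case (b) I would use the two-sided cut-off $\eta_\rho$, equal to $1$ on $\{2\rho\le|x|\le R\}$ and vanishing outside $\{\rho\le|x|\le R+\delta\}$, and let $\rho\to 0$. The outer error is controlled as in (a). For the inner error, the weight $e^{\tau\phi(\rho)}$ grows like $\rho^{-\tau}$, but the infinite-order vanishing hypothesis gives $\int_{B_{2\rho}} u^2\le C_m\rho^m$ for every $m$, so choosing $m$ large kills the inner contribution as $\rho\to 0$ for any fixed $\tau$. Condition (F5) is needed here because the pointwise error $|\hat{f}_q(\eta_\rho u)-\eta_\rho\hat{f}_q(u)|\le C|u|^{p-1}$ with $p\in(1,2)$ must be absorbed into the $\tau^{3/2}$ weighted $L^2$ norm of $u$ on the left. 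For case (c), I would invoke the classical observation that, under (F5) and $V\in L^\infty$, a solution of \eqref{eq:model_1} which vanishes on a set $E$ of positive measure must vanish of infinite order at every Lebesgue density point of $E$, thereby reducing (c) to (b).

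The main obstacle will be the first step: establishing the variable-coefficient Carleman estimate while keeping the positive coefficient $c_q$ on the $\hat{F}_q^{1/2}$ term bounded below uniformly in $q\in[1,2)$. The delicate part is tracking the interaction of the Lipschitz perturbation of the metric and the weight $\phi$ with the nonlinearity inside the conjugated operator; the (F3) bounds on $\nabla_x f_q$ and $\nabla_x F_q$ are what keep those commutators from swallowing the favorable term. Once the Carleman estimate is in place, the three cases are routine propagation arguments.
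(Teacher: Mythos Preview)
Your outline is correct and matches the paper's strategy: establish a variable-coefficient Carleman estimate with the sublinear term built into the principal symbol, then run the same cut-off and absorption arguments for (a), (b), (c), with (c) reduced to (b) via infinite-order vanishing at a density point of $E$. The use of (F1)--(F4) you describe is exactly how the paper handles the general nonlinearity, and your use of (F5) in (b) and (c) is also what the paper does (to control the inner-annulus mismatch $\hat f_q(\eta_\rho u)-\eta_\rho\hat f_q(u)$ and to close the Caccioppoli/Sobolev iteration in the density-point argument).

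The one genuine difference is how you plan to obtain the Carleman estimate itself. You propose to treat the Lipschitz metric perturbatively, writing $a^{ij}=\delta_{ij}+O(|x|)$ near the origin and absorbing the $O(|x|)$-terms into the gradient contribution on the left. The paper instead passes to the Aronszajn--Krzywicki--Szarski ``geodesic polar coordinates'' (a conformal rescaling of $a_{ij}$ that makes the radial direction exactly Euclidean), and then carries out the conjugation in those coordinates; it explicitly notes that your perturbative route is a legitimate alternative (citing Koch--Tataru). The AKS approach has the advantage that the radial part of the operator is \emph{exactly} $\partial_t^2$ up to a first-order term with bounded coefficient, so the commutator computation separates cleanly into the flat piece, the sublinear piece, and lower-order errors of size $O(e^t)$ coming from $\partial_t b$ and $\partial_t\Psi$; these are absorbed by shrinking $r_0$. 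Your perturbative approach is more elementary but requires you to check carefully that the cross-terms between $(a^{ij}-\delta_{ij})\partial_{ij}$ and the pseudoconvex weight $\phi$ do not destroy the positive $\hat F_q$ contribution---this is doable (and is essentially what \cite{KT01} does in the linear setting), but it is precisely the ``delicate part'' you flag at the end. One minor correction: you do not need $c_q$ to be bounded below \emph{uniformly} in $q\in[1,2)$; for each fixed $q$ a positive constant suffices, and indeed in the paper $c_q\sim(2-q)^{1/2}$ degenerates as $q\to 2$.
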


Again the argument is based on a Carleman inequality (c.f. Theorem \ref{prop:varmet}), which is explained in more detail in Section \ref{sec:var}. In order to deal with the Lipschitz coefficients of the metric, we use the ``geodesic normal coordinates" introduced by Aronszajn, Krzywicki and Szarski in \cite{AKS62}. This is technically more involved than the proof of Theorem \ref{thm:sub_lin_Carl}, but relies on the same ideas.

\subsection{Outline of the article}
The remainder of the article is organized as follows: In Section \ref{sec:prelim} we first recall some basic properties of the solutions to \eqref{eq:model} and to \eqref{eq:model_1}. Then, in Section \ref{sec:CarlI}, we prove the main Carleman estimate, i.e. Theorem \ref{thm:sub_lin_Carl}, in the model case. Based on this, we show how such a Carleman estimate implies the desired results of Theorems \ref{thm:SMUCP} and \ref{thm:SMUCP_var} in Section \ref{sec:proofs}. Finally, in Section \ref{sec:var} we then conclude our argument by also deducing the variable coefficient Carleman estimate of Theorem \ref{prop:varmet}.

\section{Preliminaries}
\label{sec:prelim}

In this section, we describe several auxiliary results, which will be used in the proofs of Theorems \ref{thm:SMUCP} and \ref{thm:SMUCP_var}. 

We begin by defining the notation of a weak solution to  \eqref{eq:model_1}:

\begin{defi}
\label{defi:weak_sol}
Assume that the conditions (A1)-(A3) and (F1)-(F4) hold true.
Let $u\in H^1_{loc}(\Omega)\cap L^{\infty}_{loc}(\Omega)$ and assume that $x \mapsto f(x,u(x))$ is Lebesgue measurable. Then $u$ is a weak solution to \eqref{eq:model_1} if for all $\xi \in H^{1}_{loc}(\Omega)\cap L^{\infty}_{loc}(\Omega)$
\begin{align*}
(a \nabla u, \nabla \xi)_{L^2(\Omega)} -(\hat{f}_q( u), \xi)_{L^2(\Omega)} = (Vu, \xi)_{L^2(\Omega)}.
\end{align*}
\end{defi}

Let us discuss the regularity of these solutions: If we only use the assumption that $f_q \in L^{\infty}_{loc}(\Omega \times \R)$, a bootstrap argument of elliptic regularity estimates directly implies that $u\in W^{2,p}_{loc}$ for all $p\in (1,\infty)$ and also $u\in C^{1,\alpha}_{loc}$ for all $\alpha \in (0,1)$. Due to our strengthening of the regularity conditions (c.f. conditions (F3) and (F4)), this could even be further bootstrapped. As it is not necessary in the sequel, we do not discuss this further. 
Assuming that for some $x_0\in \Omega$ we have $u(x_0)=0$, these regularity results imply that we may always assume that for all $x\in \Omega$ we have $u(x)\in (-\epsilon_0, \epsilon_0)$, where $\epsilon_0>0$ is the constant from the conditions (F1)-(F4). Indeed, if this were not the case, we could simply decrease the size of $\Omega$. In the sequel, we will always assume that this has already been carried out. \\

In the following arguments, we will often use that solutions to \eqref{eq:model_1} satisfy elliptic gradient estimates:

\begin{lem}[Caccioppoli]
\label{lem:Cacc}
Let (A1)-(A3) and (F1)-(F4) hold.
Let $u\in H^{1}_{loc}(B_4)\cap L^{\infty}_{loc}(B_4)$ be a solution to
\begin{align*}
\p_i a^{ij} \p_j u + \hat{f}_q(u)= V u \mbox{ in } B_4, 
\end{align*}
where $V \in L^{\infty}(B_4)$. Then for any $r\in (0,2)$ we have
\begin{align*}
\|\nabla u\|_{L^2(B_{r})} \leq C \left(\left\|\frac{|\hat{F}_q(u)|^{1/2}}{|u|^{1/2}}\right\|_{L^2(B_{2r})} + r^{-1}\|u\|_{L^2(B_{2r})} \right).
\end{align*}
\end{lem}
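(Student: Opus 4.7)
The plan is to run a standard Caccioppoli-type argument, with the only novelty being how the nonlinear term is handled using assumption (F1).

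First, I fix a radius $r \in (0,2)$ and choose a cutoff $\eta \in C_c^\infty(B_{2r})$ with $0 \leq \eta \leq 1$, $\eta \equiv 1$ on $B_r$ and $|\nabla \eta| \leq C/r$. Then I plug $\xi = \eta^2 u$ into the weak formulation from Definition \ref{defi:weak_sol}; this is admissible since $\eta^2 u \in H^1_0(B_{2r}) \cap L^\infty$. Expanding $\nabla \xi = \eta^2 \nabla u + 2\eta u \nabla \eta$, this yields
\begin{align*}
\int a^{ij} \partial_j u\, \partial_i u \,\eta^2 \,dx = \int \hat{f}_q(u)\, u\, \eta^2 \,dx + \int V u^2 \eta^2 \,dx - 2\int a^{ij} \partial_j u \, u\, \eta \,\partial_i \eta \,dx.
\end{align*}
Using the lower ellipticity bound from (A1) on the left and Cauchy--Schwarz combined with Young's inequality on the last term on the right (with ellipticity factored in), I can absorb $\tfrac{\lambda}{2}\int \eta^2 |\nabla u|^2\,dx$ into the left-hand side, at the cost of a term of the form $C(\lambda, \Lambda) \int u^2 |\nabla \eta|^2 \,dx \lesssim r^{-2} \|u\|_{L^2(B_{2r})}^2$. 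The potential term is bounded by $\|V\|_{L^\infty} \|u\|_{L^2(B_{2r})}^2$, which for $r \in (0,2)$ is also controlled by $C r^{-2}\|u\|_{L^2(B_{2r})}^2$.

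The key step is handling the nonlinear term $\int \hat{f}_q(u) u \,\eta^2\,dx$. By assumption (F1), $s f_q(x,s) \leq q F_q(x,s)$ on $(-\epsilon_0,\epsilon_0)\setminus\{0\}$, and since $s f_q(x,s) > 0$ implies $F_q(x,s) \geq 0$, we have $\hat{F}_q(u) = |\hat{F}_q(u)|$ pointwise. Because by the regularity discussion preceding the lemma we may assume $|u(x)| \leq \epsilon_0$ throughout, this gives
\begin{align*}
\int \hat{f}_q(u)\, u\, \eta^2 \,dx \leq q \int \hat{F}_q(u)\,\eta^2\,dx = q \int \frac{|\hat{F}_q(u)|}{|u|}\,|u|\,\eta^2\,dx \leq q\epsilon_0 \left\|\frac{|\hat{F}_q(u)|^{1/2}}{|u|^{1/2}}\right\|_{L^2(B_{2r})}^2,
\end{align*}
where the integrand is understood to vanish on $\{u=0\}$ (consistent with $F_q(x,0)=0$ from (F1)). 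Combining these bounds and taking square roots yields the claimed inequality, with $C$ depending on $n$, $\lambda$, $\Lambda$, $q$, $\epsilon_0$, and $\|V\|_{L^\infty}$.

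I do not anticipate any real obstacle here: the only mildly delicate point is the interpretation of $|\hat{F}_q(u)|^{1/2}/|u|^{1/2}$ on the nodal set of $u$, which is immediate from (F1) and the continuity of $u$, and the observation that the nonlinearity term gives back precisely the norm that appears on the right-hand side rather than $\|\hat{F}_q(u)^{1/2}\|_{L^2}$, which is the reason the $L^\infty$ bound on $u$ is needed.
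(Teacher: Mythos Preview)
Your proof is correct and follows essentially the same route as the paper: test the weak formulation with $\eta^2 u$, use ellipticity and Young's inequality to absorb the cross term, and control the nonlinear contribution via (F1) together with the standing bound $|u|\leq\epsilon_0$. The paper organizes the algebra slightly differently by first estimating $\int a^{ij}\partial_i(u\eta)\partial_j(u\eta)$ and then restricting to $B_r$, but this is the same argument.
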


Here and in the sequel we have used the notation $B_{r}=B_{r}(0)$ in order to denote the ball of radius $r>0$ centered at zero.

\begin{proof}
The proof follows from the usual integration by parts identities. Indeed, let $\eta:B_{4}\rightarrow [0,\infty)$ be a cut-off function, which is equal to one in $B_{r}$ and which vanishes outside of $B_{2r}$ and which satisfies $|\nabla \eta|\leq \frac{C}{r}$, $|D^2 \eta|\leq \frac{C}{r^2}$. Then,
\begin{align*}
\lambda \int\limits_{B_{2r}}|\nabla (u\eta)|^2 dx 
&\leq \int\limits_{B_{2r}} a^{ij} \p_i(u\eta) \p_j(u \eta) dx
= \int\limits_{B_{2r}} a^{ij} \eta(\p_i u)\p_j(u\eta) dx + \int\limits_{B_{2r}} a^{ij} u (\p_i \eta) \p_j(u\eta)dx\\
& = \int\limits_{B_{2r}} a^{ij}\p_i u \p_j(u \eta^2) dx 
+ \int\limits_{B_{2r}} a^{ij} u^2 (\p_i \eta)( \p_j \eta) dx\\
&= \int\limits_{B_{2r}} \hat{f}_q(u) u \eta^2 dx + \int\limits_{B_{2r}} Vu^2 \eta^2 dx + \int\limits_{B_{2r}} u^2 a^{ij} (\p_i \eta)(\p_j \eta) dx\\
& \leq C_{\lambda, \Lambda,n}(\|V\|_{L^{\infty}}+1 + r^{-2})\|u\|_{L^2(B_{2r})}^2 + \left\| \eta \frac{|\hat{F}_q(u)|^{1/2}}{|u|^{1/2}} \right\|_{L^2(B_{2r})}^2.
\end{align*}
Here we used the (weak) equation as well as Hölder's inequality. Using that $\eta=1$ on $B_r$ then implies the desired estimate.
\end{proof}

Next, we show that the infinite order of vanishing can be equivalently defined by various different norms for solutions to \eqref{eq:model_1}.

\begin{lem}[Order of vanishing]
\label{lem:order_of_van}
Let the conditions (A1)-(A3) and (F1)-(F4) hold.
Let $u\in H^{1}_{loc}(B_4)\cap L^{\infty}_{loc}(B_4)$ be a solution to
\begin{align*}
\p_i a^{ij} \p_j u + \hat{f}_q(u)= V u \mbox{ in } B_4, 
\end{align*}
where $V \in L^{\infty}(B_4)$ and $q\in (1,2)$. 
Then the following are equivalent:
\begin{itemize}
\item[(i)] For all $m\in \N$ we have 
\begin{align}
\label{eq:L2_oov}
\lim\limits_{r\rightarrow 0} r^{-m} \int\limits_{B_{r}(0)} u^2 dx = 0.
\end{align}
\item[(ii)] For some $\ell >0$ and all $m\in \N$ we have 
\begin{align}
\label{eq:Lq_oov}
\lim\limits_{r\rightarrow 0} r^{-m} \int\limits_{B_{r}(0)} |u|^{\ell} dx = 0.
\end{align}
\end{itemize}
If in addition the condition (F5) is satisfied and if (i) holds, then also the function $\hat{f}_q(u)$ vanishes of infinite order, i.e. for all $m\in \N$ 
\begin{align*}
\lim\limits_{r\rightarrow 0} r^{-m} \int\limits_{B_{r}(0)} \hat{f}_q(u) dx = 0. 
\end{align*}
\end{lem}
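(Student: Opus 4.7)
The proof rests on the local boundedness hypothesis $u\in L^\infty_{loc}$ and amounts to standard Hölder-interpolation manipulations; no further use is made of the PDE beyond the fact that elliptic regularity has already supplied the $L^\infty$ bound. The equivalence is proved in two cases, and the final statement about $\hat{f}_q(u)$ follows from (F5) by a direct Hölder bound. I anticipate that the main (and rather mild) technical point will be keeping the case distinction $\ell\geq 2$ versus $\ell<2$ straight in the converse direction of the equivalence.

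For $(i)\Rightarrow(ii)$ the choice $\ell=2$ makes this direction tautological. For the converse, I would fix $\ell>0$ as in (ii) and a radius $R>0$ with $M:=\|u\|_{L^\infty(B_R)}<\infty$. If $\ell\geq 2$, Hölder's inequality with conjugate exponents $\ell/2$ and $\ell/(\ell-2)$ (interpreted trivially if $\ell=2$) gives
\begin{align*}
\int_{B_r}u^2\,dx \leq \left(\int_{B_r}|u|^\ell\,dx\right)^{2/\ell}|B_r|^{1-2/\ell}
\leq C\left(\int_{B_r}|u|^\ell\,dx\right)^{2/\ell}r^{n(1-2/\ell)},
\end{align*}
and since $\int_{B_r}|u|^\ell\,dx$ decays faster than any power of $r$, so does the right-hand side after choosing $m$ large enough in (ii). If $\ell<2$, the pointwise estimate $u^2=|u|^\ell\,|u|^{2-\ell}\leq M^{2-\ell}|u|^\ell$ immediately yields $\int_{B_r}u^2\,dx\leq M^{2-\ell}\int_{B_r}|u|^\ell\,dx=o(r^m)$ for every $m\in\N$. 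Either case delivers (i).

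For the infinite-order vanishing of $\hat{f}_q(u)$, assume (F5). Then $|\hat{f}_q(u)(x)|\leq \kappa_3 |u(x)|^{p-1}$ pointwise with $p-1\in(0,1)$, so Hölder's inequality with conjugate exponents $2/(p-1)$ and $2/(3-p)$ gives
\begin{align*}
\int_{B_r}|\hat{f}_q(u)|\,dx \leq \kappa_3 \left(\int_{B_r}u^2\,dx\right)^{(p-1)/2}|B_r|^{(3-p)/2} \leq C\left(\int_{B_r}u^2\,dx\right)^{(p-1)/2}r^{n(3-p)/2}.
\end{align*}
By (i) the first factor on the right is $o(r^{m(p-1)/2})$ for every $m\in\N$, so the right-hand side is $o(r^k)$ for any prescribed $k\in\N$ once $m$ is taken sufficiently large, which is the desired conclusion.
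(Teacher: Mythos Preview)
Your proof is correct and follows the same approach as the paper's: both arguments rely only on $u\in L^{\infty}_{loc}$ and elementary H\"older/interpolation bounds, with the last part invoking (F5). Your write-up is in fact more explicit than the paper's, which simply attributes both directions of the equivalence to H\"older's inequality and local boundedness; your observation that $(i)\Rightarrow(ii)$ is tautological with $\ell=2$ is a clean shortcut, and for the final claim the paper appeals to the already-established equivalence with $\ell=p-1$ rather than redoing the H\"older step, but this is a cosmetic difference.
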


\begin{proof}
Due to the regularity of solutions to \eqref{eq:model_1} and due to the assumption that $\ell>0$, the implication $\eqref{eq:L2_oov} \Rightarrow \eqref{eq:Lq_oov}$ follows from Hölder's inequality and the fact that $u\in L^{\infty}_{loc}$. The reverse implication follows from the assumption that $u \in L^{\infty}_{loc}$.

In the case that the condition (F5) is satisfied, we have
\begin{align*}
0\leq \lim\limits_{r\rightarrow 0} r^{-m} \int\limits_{B_{r}(0)} \hat{f}_q(u) dx \leq \kappa_3 \lim\limits_{r\rightarrow 0} r^{-m} \int\limits_{B_{r}(0)} |u|^{p-1} dx =0,
\end{align*}
where for the last inequality, we used the equivalence of (i) and (ii) and the fact that $p-1>0$.
\end{proof}

\section{The Carleman Estimate in the Model Set-Up}

\label{sec:CarlI}

In this section, we present the argument for Theorem \ref{thm:sub_lin_Carl}. The variable coefficient analogue will be proved in Section \ref{sec:var}, where we deal with the full problem (which also involves Lipschitz metrics).


\subsection{Proof of Theorem \ref{thm:sub_lin_Carl}}
The main idea leading to the Carleman estimate from Theorem \ref{thm:sub_lin_Carl} is to include the sublinear potential into the main operator instead of dealing with it perturbatively (as one would usually do for superlinear potentials).

\begin{proof}[Proof of Theorem \ref{thm:sub_lin_Carl}]
We separate the proof into several steps:\\

\emph{Step 1: Conjugation.}
We introduce conformal polar coordinates $x=e^{t}\theta$ with $(t,\theta) \in \R \times S^{n-1}$. In these the Laplacian reads
\begin{align*}
|x|^2\D = \p_t^2 + (n-2)\p_t + \D_{S^{n-1}}.
\end{align*}
Conjugating this with $e^{- \frac{n-2}{2}t}$ yields
\begin{align*}
e^{\frac{n-2}{2}t} (\p_t^2 + (n-2)\p_t + \D_{S^{n-1}}) e^{- \frac{n-2}{2}t} 
= \p_t^2 - \frac{(n-2)^2}{4} + \D_{S^{n-1}}.
\end{align*}
To achieve this, we consider the function $\tilde{v}(t,\theta) := e^{-\frac{n-2}{2}t}u(e^{t}\theta)$. The equation \eqref{eq:eq_main} then turns into
\begin{align}
\label{eq:eq1}
\left(\p_t^2 - \frac{(n-2)^2}{4} + \D_{S^{n-1}} + e^{2t}\frac{\tilde{f}_q(\tilde{u})}{\tilde{u}}\right) \tilde{v} = \tilde{g},
\end{align}
where $\tilde{g}(t,\theta) = e^{2t} g(e^t \theta)$, $\tilde{u}(t,\theta):=u(e^{t}\theta)$ and $\tilde{f}_q(\tilde{u})(t,\theta):=\hat{f}_q(\tilde{u})(e^t\theta)$. In order to prove the Carleman estimate from Theorem \ref{thm:sub_lin_Carl}, we argue by means of the usual conjugation argument and conjugate \eqref{eq:eq1} with the weight $e^{\tau \varphi}$, where $\varphi(t) = \psi(e^t)$. This yields the following symmetric and antisymmetric parts for the conjugated operator $L_{\varphi}:=S+A$:
\begin{align}
\label{eq:separate}
\begin{split}
S &= \p_t^2 + \D_{S^{n-1}} + \tau^2 (\varphi')^2 - \frac{(n-2)^2}{4} + h_q(\tilde{u}),\\
A &= -2\tau \varphi' \p_t - \tau \varphi''.
\end{split}
\end{align}
For ease of notation, we have abbreviated $h_q(\tilde{u}):= e^{2t}\frac{\tilde{f}_q(\tilde{u})}{\tilde{u}}$.
The Carleman estimate then follows from the expansion
\begin{align}
\label{eq:expand}
\|A v\|_{L^2}^2 + \|Sv\|_{L^2}^2 + \int\limits_{\R \times S^{n-1}}([S,A]v,v)dt d\theta = \|Lv\|_{L^2}^2,
\end{align}
where $v= e^{\tau \varphi} \tilde{v}$ and where we abbreviate $(\cdot, \cdot):=(\cdot,\cdot)_{L^2(\R \times S^{n-1})}$.
More precisely, the Carleman estimate follows, if we can prove lower bounds for the commutator $[S,A]$.
With respect to the usual commutator estimate for $L^2$ Carleman estimates, only the terms involving $h_q$ are new. Indeed, by choosing $\tau \geq \tau_0>1$ for some sufficiently large constant $\tau_0$ and recalling our choice of $\varphi$, the ``standard commutator term" $([\p_t^2 + \D_{S^{n-1}} + \tau^2 (\varphi')^2 - \frac{(n-2)^2}{4}, -2\tau \varphi' \p_t - \tau \varphi'']v,v)$ can be controlled as follows
\begin{align}
\label{eq:standard_Carl}
\begin{split}
&([\p_t^2 + \D_{S^{n-1}} + \tau^2 (\varphi')^2 - \frac{(n-2)^2}{4}, -2\tau \varphi' \p_t - \tau \varphi'']v,v)\\
&= 4 \tau^3 (\varphi'' (\varphi')^2 v,v) - \tau (\varphi'''' v, v)
+ 4 \tau (\varphi'' \p_t v, \p_t v)\\
&\geq 3\tau^{3}\||\varphi''|^{1/2} v\|_{L^2}^2 + 3 \tau \||\varphi''|^{1/2} \p_t v\|_{L^2}^2 .
\end{split}
\end{align}
Hence, in Step 2, we mainly consider the new, nonlinear contribution $([h_q(\tilde{u}),-2\tau \varphi' \p_t - \tau \varphi''] v, v)$. In Step 3, we then exploit the commutator estimate for the sublinear term together with the symmetric operator $S$ in order to upgrade the (radial) gradient estimate from \eqref{eq:standard_Carl} to a full gradient estimate (also controlling the spherical part of the gradient).\\

\emph{Step 2: The sublinear nonlinearity.}
We compute
\begin{align}
\label{eq:comm_0}
-2\tau [ h_q(\tilde{u}), \varphi' \p_t ]
= 2 \tau (q-2) \varphi' e^{2t} \sgn(\tilde{u})|\tilde{u}|^{q-3} \p_t \tilde{u} + 4\tau \varphi' h_q(\tilde{u}).
\end{align}
As a consequence, by the identity $v=e^{\tau \varphi} e^{\frac{2-n}{2}t} \tilde{u}$,
\begin{align}
\label{eq:comm}
\begin{split}
-2\tau (v,[ h_q(\tilde{u}), \varphi' \p_t ] v)
&= 2\tau (q-2) (e^{2t} \varphi' v^2 \sgn(\tilde{u})|\tilde{u}|^{q-3}, \p_t \tilde{u}  )
 + 4 \tau (e^{2t}\varphi' |\tilde{u}|^{q-2}v,v)\\
&= 2 \tau (q-2) ( l(t)  |\tilde{u}|^{q-2} \tilde{u}, \p_t \tilde{u}) 
+ 4 \tau ( e^{2t}\varphi' v, |\tilde{u}|^{q-2}v),
\end{split}
\end{align}
where all scalar products are those of the Hilbert space $L^2(\R \times S^{n-1})$, $l(t):= e^{(2-n)t}e^{2t}\varphi'(t)e^{2 \tau \varphi}$. We further study the first term on the right hand side of \eqref{eq:comm}: Noting that $|\tilde{u}|^{q-2}\tilde{u} \p_t \tilde{u}= \p_t \frac{1}{q}|\tilde{u}|^{q}$ leads to
\begin{align*}
( l(t) |\tilde{u}|^{q-2} \tilde{u}, \p_t \tilde{u} )
= - \frac{1}{q}( l'(t) |\tilde{u}|^{q-2} \tilde{u},\tilde{u} ).
\end{align*}
Inserting this back into \eqref{eq:comm} implies
\begin{align}
\label{eq:comm2}
\begin{split}
-2\tau (v,[ h_q(\tilde{u}), \varphi' \p_t ] v)
&= -\tau \frac{2(q-2)}{q}( l'(t) |\tilde{u}|^{q-2} \tilde{u},\tilde{u})
+ 4 \tau (e^{2t}\varphi' v |\tilde{u}|^{q-2},v)\\
& = -\tau \frac{2(q-2)}{q}\left( e^{2t} \left(\varphi''(t) + 2 \tau (\varphi'(t))^2 +\left(4-n\right)\varphi'(t) \right)|\tilde{u}|^{q-2} v, v \right)\\
& \quad + 4 \tau (e^{2t}\varphi' v |\tilde{u}|^{q-2},v).
\end{split}
\end{align}
Since $\varphi'' \geq 0$ and $q-2\leq 0$ the first two terms in  \eqref{eq:comm2} are positive. The last two terms are not necessarily signed, but by choosing $\tau \geq \tau_0>0$ sufficiently large and by recalling the explicit choice of our Carleman weight $\varphi$, they can be absorbed into the second contribution. In particular, combining the estimates \eqref{eq:standard_Carl} and \eqref{eq:comm2}, then leads to the estimate
\begin{align}
\label{eq:est_wo_grad}
\tau \left(\frac{2-q}{q} \right)^{\frac{1}{2}} \|e^{t} |\tilde{u}|^{\frac{q-2}{2}}v\|_{L^2} + \tau^{\frac{3}{2}} \||\varphi''|^{\frac{1}{2}}v\|_{L^2} + \tau^{\frac{1}{2}} \||\varphi''|^{\frac{1}{2}} \p_t v \|_{L^2} \leq C \|Lv\|_{L^2}.
\end{align}
Returning to Cartesian coordinates, this yields all the terms in the Carleman estimate, with the exception of the estimate for the spherical component of the gradient.\\

\emph{Step 3: Deriving the full gradient estimate.} Last but not least, we upgrade the gradient estimate from \eqref{eq:standard_Carl}, which only involves the radial derivatives to a full gradient estimate. To this end, we exploit the symmetric part $S$ of the operator. Indeed, testing the symmetric part with $\tau c_q \varphi'' v$ for a sufficiently small constant $c_q \in \left(0,\left(\frac{q-2}{q}\right)^{1/2}\right)$ and using \eqref{eq:est_wo_grad}, we infer
\begin{align*}
c_q \tau \||\varphi''|^{1/2} \nabla_{S^{n-1}}v\|_{L^2}^2
&\leq c_q \tau |(Sv, \varphi'' v)| + c_q \tau \||\varphi''|^{1/2} \p_t v\|_{L^2}^2 + c_q \tau^3 \||\varphi''|^{1/2} v\|_{L^2}^2
+ c_q \tau \|e^t |\tilde{u}|^{\frac{q-2}{2}}v\|_{L^2}^2\\
& \leq \frac{1}{2}\|S v\|_{L^2}^2 + C c_q \tau^2 \||\varphi''|^{1/2}v\|_{L^2}^2\\
& \quad + c_q \tau \||\varphi''|^{1/2} \p_t v\|_{L^2}^2
+ c_q \tau^3 \||\varphi''|^{1/2} v\|_{L^2}^2 + c_q \tau \|e^t|\tilde{u}|^{\frac{q-2}{2}}v\|_{L^2}^2\\
&\leq \frac{1}{2}\|S v\|_{L^2}^2 + ([S,A]v,v)
\leq \|L v\|_{L^2}^2.
\end{align*}
As a consequence, we may include the full gradient term into the Carleman estimate. This concludes the proof of Theorem \ref{thm:sub_lin_Carl}.
\end{proof}

\section{Proof of Theorems \ref{thm:SMUCP} and \ref{thm:SMUCP_var}}
\label{sec:proofs}

In this section we present the proof of Theorems \ref{thm:SMUCP} and \ref{thm:SMUCP_var} starting from the corresponding Carleman estimates (Theorems \ref{thm:sub_lin_Carl} and \ref{prop:varmet}). For the variable coefficient setting, the corresponding Carleman estimate will be proved in Section \ref{sec:Carl_var}.
By the regularity estimates and the discussion in Section \ref{sec:prelim}, we may assume that for $x\in \Omega$ we have $u(x) \in (-\epsilon_0, \epsilon_0)$. 

In the sequel, we first prove part (b) of Theorems \ref{thm:SMUCP} and \ref{thm:SMUCP_var}, which in particular also implies (a) in the case $q\in (1,2)$. Then we explain the modifications that allow us to prove the property (a) in the limiting case $q=1$. Last but not least, we explain the derivation of part (c) of the corresponding theorems.

\subsection{Proof of Theorems \ref{thm:SMUCP}(b) and \ref{thm:SMUCP_var}(b)}

The proof of the SUCP is a direct consequence of the Carleman estimate. Indeed, we apply it to a cut-off of $u$. Using the vanishing of infinite order, we are able to remove the cut-off around zero, if $q\in (1,2)$.

\begin{proof}[Proof of Theorems \ref{thm:SMUCP}(b) and \ref{thm:SMUCP_var}(b)]
Since $\Omega$ is open, translation and scaling allows us to assume that $B_{4} \subset \Omega$.

For $\epsilon \in (0,1)$ let $\eta_{\epsilon}: B_4 \rightarrow (0,\infty)$ be a cut-off function, which is supported in $B_{2}\setminus B_{\epsilon}$, which is equal to one in $B_1\setminus B_{2 \epsilon}$ and which satisfies the bounds
\begin{align}
\label{eq:eta}
\begin{split}
& |\nabla \eta_{\epsilon}(x)| \leq \frac{C}{\epsilon}, \ 
|D^2 \eta_{\epsilon}(x)| \leq \frac{C}{\epsilon^2} \mbox{ for all } x \in B_{2 \epsilon} \setminus B_{\epsilon},\\
& |\nabla \eta_{\epsilon}(x)| \leq C, \ 
|D^2 \eta_{\epsilon}(x)| \leq C \mbox{ for all } x \in B_{2 } \setminus B_{1},
\end{split}
\end{align}
where $C>0$ is independent of $\epsilon>0$. Then the function $v_{\epsilon}:= u \eta_{\epsilon}$ satisfies
\begin{align}
\label{eq:cut_off_eq}
\p_i a^{ij} \p_j v_{\epsilon} + \hat{f}_q(v_{\epsilon}) = V v_{\epsilon} +(\hat{f}_q(v_{\epsilon})-\eta_{\epsilon} \hat{f}_q(u)) + 2 a^{ij}\p_i \eta_{\epsilon} \p_j u
+ u \p_i a^{ij}\p_j \eta_{\epsilon} \mbox{ in } B_4.
\end{align}
We apply the Carleman estimate from Theorem \ref{thm:sub_lin_Carl} to $v_{\epsilon}$, which leads to
\begin{align}
\label{eq:Carl_appl}
\begin{split}
&\tau^{3/2}\|e^{\tau \phi} (1+\ln^2(|x|))^{-\frac{1}{2}} v_{\epsilon}\|_{L^2(B_2)}
+ \tau^{1/2} \|e^{\tau \phi} (1+\ln^2(|x|))^{-\frac{1}{2}}|x| \nabla v_{\epsilon}\|_{L^2(B_2)}\\
& +\tau \left( \frac{q-2}{q} \right)^{\frac{1}{2}} \|e^{\tau \phi} |x||v_{\epsilon}|^{\frac{q}{2}}\|_{L^2(B_2)} \\
&\leq C \left( \|e^{\tau \phi} |x|^2 V v_{\epsilon}\|_{L^2(B_2)} + \|e^{\tau \phi}|x|^2(\hat{f}_q(v_{\epsilon})-\eta_{\epsilon}\hat{f}_{q}(u))\|_{L^2(B_2)} + 2 \Lambda \|e^{\tau \phi} |x|^2 |\nabla \eta_{\epsilon}|| \nabla u|\|_{L^2(B_2)} \right.\\
& \quad \left. + \|e^{\tau \phi}|x|^2 u |D^2 \eta_{\epsilon}|\|_{L^2(B_2)}  \right).
\end{split}
\end{align}
We seek to pass to the limit $\epsilon \rightarrow 0$. Since none of the constants in the estimate \eqref{eq:Carl_appl} depends on $\epsilon>0$ and using the bounds in \eqref{eq:eta}, this can be achieved by invoking the infinite order of vanishing of $u$. Indeed, this directly allows us to pass to the limit $\epsilon \rightarrow 0$ in all $L^2$ terms of $v_{\epsilon}$ or $u$. In order to deal with the gradient terms and the nonlinearity on the left hand side, we apply Lemmas \ref{lem:Cacc} and \ref{lem:order_of_van}. For the nonlinearity on the right hand side, we use the condition (F5) in combination the second part of Lemma \ref{lem:order_of_van}.

Setting $v_0:=\eta_0 u$ (where $\eta_0$ is the pointwise limit of $\eta_{\epsilon}$; in particular $\eta_0=1$ in $B_1$), then implies
\begin{align}
\label{eq:Carl_appl_1}
\begin{split}
&\tau^{3/2}\|e^{\tau \phi} (1+\ln^2(|x|))^{-\frac{1}{2}}v_0\|_{L^2(B_2)}
 + \tau^{1/2} \|e^{\tau \phi} (1+\ln^2(|x|))^{-\frac{1}{2}}|x| \nabla v_0\|_{L^2(B_2)}\\
& + \tau \left( \frac{q-2}{q} \right)^{\frac{1}{2}} \|e^{\tau \phi} |x||v_0|^{\frac{q}{2}}\|_{L^2(B_2)}\\
&\leq C \left( \|e^{\tau \phi} |x|^2 V v_0\|_{L^2(B_2)} + \|e^{\tau \phi}|x|^2(\hat{f}_q(v_0)-\hat{f}_{q}(u))\|_{L^2(B_2)} + 2 \|e^{\tau \phi} |x|^2 |\nabla \eta_0|| \nabla u|\|_{L^2(B_2)} \right.\\
& \quad \left. + \|e^{\tau \phi}|x|^2 u |D^2 \eta_{0}|\|_{L^2(B_2)}  \right).
\end{split}
\end{align}
By virtue of the $L^{\infty}$ boundedness of $V$, we may further absorb the first term on the right hand side of \eqref{eq:Carl_appl_1} into the left hand side of \eqref{eq:Carl_appl_1} if $\tau$ is chosen such that $\tau \geq \tau_0(\|V\|_{L^{\infty}})$. Using that $\eta_0=1$ in $B_1$, which in particular entails that
\begin{align*}
\supp(|\nabla \eta_0||\nabla u|), \supp(|D^2 \eta_0||u|), \supp(|\hat{f}_q(v_0)-\hat{f}_q(u)|) \subset \overline{B_2 \setminus B_1},
\end{align*}
we then further estimate
\begin{align*}
\begin{split}
&e^{\tau \psi(1/2)} \left(\tau^{3/2}\| (1+\ln^2(|x|))^{-\frac{1}{2}} v_0 \|_{L^2(B_{1/2})}
 + \tau^{1/2} \|(1+\ln^2(|x|))^{-\frac{1}{2}} |x| \nabla v_0\|_{L^2(B_{1/2})} \right.\\
 & \left. + \tau \left( \frac{q-2}{q} \right)^{\frac{1}{2}} \||x| |v_0|^{\frac{q}{2}}\|_{L^2(B_{1/2})} \right)\\
&\leq C e^{\tau \psi(1)} \left( \||x|^2(\hat{f}_q(v_0)-\hat{f}_{q}(u))\|_{L^2(B_2\setminus B_1)} + 2  \| |x|^2 |\nabla \eta_0||\nabla u|\|_{L^2(B_2\setminus B_1)} \right.\\
& \quad \left. +  \||x|^2 u |D^2 \eta_{0}|\|_{L^2(B_2\setminus B_1)}  \right).
\end{split}
\end{align*}
Dividing by $e^{\tau \psi(1/2)}$ and passing to the limit $\tau \rightarrow \infty$ (and recalling the a priori estimates for $u$) then implies that $u\equiv 0$ in $B_{1/2}$. Iterating this argument yields that $u\equiv 0$ in $\Omega$.
\end{proof}

We remark that this proof simultaneously deals with the situation of Theorems \ref{thm:SMUCP} and \ref{thm:SMUCP_var}.

\subsection{Proof of Theorems \ref{thm:SMUCP}(a) and \ref{thm:SMUCP_var}(a)}

Without loss of generality, we may assume that $u\equiv 0$ in $B_{r_0}(0)$ for some $r_0 \in (0,1/4)$.
The proofs of Theorems \ref{thm:SMUCP}(a) and \ref{thm:SMUCP_var}(a) then proceed analogously to the one, which was explained in the previous subsection. However, as $u\equiv 0$ in $B_{r_0}(0)$, we do not need to use a cut-off function close to zero, but can directly consider a bump function: More precisely, we could consider a cut-off $\eta_{0}:B_4 \rightarrow (0,\infty)$ which is supported in $B_2$, is equal to one in $B_{1/2}$ and satisfies the bounds
\begin{align*}
|\nabla \eta_{0}|\leq C, \ |D^2 \eta_{0}| \leq C \mbox{ for all } x \in B_2 \setminus B_1.
\end{align*}
As a consequence, inserting $v_0:= u \eta_0$ into the Carleman estimates from Theorems \ref{thm:sub_lin_Carl} and \ref{prop:varmet}, we directly infer the estimate \eqref{eq:Carl_appl_1}, from which we conclude as in the previous section.

\subsection{Proof of Theorems \ref{thm:SMUCP}(c) and \ref{thm:SMUCP_var}(c)}
We reduce the statement of Theorems \ref{thm:SMUCP}(c) and \ref{thm:SMUCP_var}(c) to that of \ref{thm:SMUCP}(a) and \ref{thm:SMUCP_var}(a) by proving a suitable growth estimate.

\begin{proof}[Proof of Theorems \ref{thm:SMUCP}(c) and \ref{thm:SMUCP_var}(c)]
Assume that there exists a measurable set $E \subset \Omega$ such that
$|E|>0$ and $u\equiv 0$ on $E$. By translation, without loss of generality we may assume that $0\in E$ and that $0$ is a point of density one of $E$. In particular, for any $\epsilon>0$ there exists a radius $r_{\epsilon}>0$ such that
\begin{align}
\label{eq:dense}
\frac{|E \cap B_{r}|}{|E|} \leq \epsilon \mbox{ for all } r \in (0,r_{\epsilon}).
\end{align}
Thus, for $r\in (0,\min\{r_{\epsilon},1/2\})$ we obtain for some constant $C>1$, which depends on $\|V\|_{L^{\infty}(B_2)}, \lambda, \Lambda, \Lambda_0$ and $n$ and which may change from line to line,
\begin{align}
\label{eq:growth}
\begin{split}
\|u\|_{L^2(B_{r})}
&= \|u\|_{L^2(B_{r}\cap E)}
\leq |E \cap B_{r}|^{1/n} \|u\|_{L^{2*}(B_r \cap E)}
\leq |E \cap B_{r}|^{1/n}\|\nabla u\|_{L^2(B_r)}\\
&\leq C|E\cap B_{r}|^{1/n}\left( r^{-1}\|u\|_{L^2(B_{2r})} + \left\|\frac{|F_q(u)|^{1/2}}{|u|^{1/2}}\right\|_{L^2(B_{2r})} \right)\\
&\stackrel{(F5)}{\leq} C|E\cap B_{r}|^{1/n}\left( r^{-1}\|u\|_{L^2(B_{2r})} + \left\||u|^{\frac{p-1}{2}}\right\|_{L^2(B_{2r})} \right)\\
&\leq C \epsilon^{\frac{1}{n}} r (r^{-1} + r^{\gamma(p,n)})\|u\|_{L^2(B_{2r})}\\
&\leq C \epsilon^{1/n} \|u\|_{L^2(B_{2r})}
\end{split}
\end{align}
for some $\gamma(p,n)>0$ (which is obtained by an application of Hölder's inequality).
Here we have used the vanishing of $u$ on $E$, the condition (F5) in combination with Hölder's and Sobolev's inequalities, the fact that $0<r\leq 1$ and the density estimate \eqref{eq:dense}. Next, we fix $m\in \N$ and choose $\epsilon>0$ such that
\begin{align*}
C \epsilon^{1/n} \leq \frac{1}{2^m}.
\end{align*}
This, then implies the growth estimate
\begin{align*}
\|u\|_{L^2(B_{r})} \leq 2^{-m}\|u\|_{L^2(B_{2r})}
\end{align*}
This can be iterated as long as $2^{k} r \leq r_m:=r_{\frac{1}{2^m}}$ (with $r_{\frac{1}{2^m}}$ denoting the corresponding radius in \eqref{eq:dense}). In particular, it implies that for $r\in (2^{-k-1},2^{-k})$ we have
\begin{align*}
\|u\|_{L^2(B_{r})} \leq 2^{-km}\|u\|_{L^2(B_{r_{m}})}.
\end{align*}
As we can argue in the same way for any $m\in \N$, we infer the infinite order of vanishing of $u$ at $x_0=0$. The strong unique continuation property from Theorem \ref{thm:SMUCP}(a) then implies that $u\equiv 0$ in $\Omega$.
\end{proof}

\begin{rmk}
\label{rmk:q0}
We emphasize that from a technical point of view the ``only" obstruction in the above arguments preventing us from also deriving the SUCP and the MUCP for the case $q=1$ consists of justifying the support assumption, which we used above, for the strong $L^2$ limit 
\begin{align*}
\lim\limits_{\epsilon \rightarrow 0}e^{\tau \phi} |x|^2 (\hat{f}_q(v_{\epsilon}) - \eta_{\epsilon} f_q(u)).
\end{align*}
The second main technical point, in which we used $q>1$, i.e. the estimate \eqref{eq:growth}, could have easily been modified to work in the case $q=1$ by relying on the estimate
\begin{align*}
\|\nabla u\|_{L^2(B_r)} 
\leq C(\|V\|_{L^{\infty}}) \left( r^{-1}  \|u\|_{L^2(B_{2r})}+ \|\hat{F}_q(u)\|_{L^1(B_{2r})}\right), \ r \in (0,2),
\end{align*}
instead of invoking Lemma \ref{lem:Cacc} in the proof of \eqref{eq:growth}.
\end{rmk}

\section{The Case of More General Nonlinearities and Lipschitz Metrics}
\label{sec:var}

In this section we consider the setting described in Section \ref{sec:results_var} which involves equations with more general nonlinearities $\hat{f}_q(u)$ and with Lipschitz metrics. Throughout this section, we assume that the conditions (A1)-(A3) and (F1)-(F4) hold. Similarly as the proof of Theorem \ref{thm:SMUCP}, the argument for Theorem \ref{thm:SMUCP_var} is crucially based on a Carleman estimate:

\begin{thm}[Variable coefficient Carleman estimate]
\label{prop:varmet}
Suppose that the conditions (A1)-(A3) and (F1)-(F4) hold.
Let $\phi(x)=\psi(|x|)$ with
\begin{align*}
\phi(r)= - \ln(r) + \frac{1}{10}\left( \ln(r)\arctan(\ln(r)) - \frac{1}{2} \ln(1+\ln^2(r)) \right).
\end{align*}
Assume that $u\in H^{1}_{loc}(\R^{n})\cap L^{\infty}_{loc}(\R^n)$ with $\supp{(u)} \subset \overline{B_{r_0}\setminus B_{\epsilon}}$, where $0<\epsilon\ll r_0\ll 1$, satisfies 
\begin{align*}
\p_i a^{ij} \p_j u + \hat{f}_q(u) & = g \mbox{ in } \R^{n}.
\end{align*}
Then there exists $\tau_0>0$ (depending on $n,q,\lambda,\Lambda, \Lambda_0,\kappa_1,\kappa_2$) such that for $\tau\geq \tau_{0}$ we have
\begin{equation}
\label{eq:vCarl}
\begin{split}
 \tau^{\frac{3}{2}} \left\|  e^{\tau \phi} (1+\ln(|x|)^2)^{-\frac{1}{2}}  u \right\|_{L^2(\R^{n+1}_+)} +  \tau^{\frac{1}{2}}\left\| e^{\tau \phi} |x| (1+\ln(|x|)^2)^{-\frac{1}{2}}  \nabla u \right\|_{L^2(\R^{n+1}_+)}\\
+  \tau \left\|  e^{\tau \phi}|x| |u|^{\frac{q}{2}} \right\|_{L^2(\R^{n+1}_+)}
\leq C(q,\lambda,\Lambda,\Lambda_0,n,\kappa_1,\kappa_2)  \ \left\|e^{\tau \phi} |x|^2  g \right\|_{L^2(\R^{n+1}_+)}.
\end{split}
\end{equation}
\end{thm}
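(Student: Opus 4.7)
\textbf{Proof plan for Theorem \ref{prop:varmet}.}
The plan is to mimic the strategy of Theorem \ref{thm:sub_lin_Carl} as closely as possible, treating the Lipschitz perturbation of the metric and the $x$-dependence of the nonlinearity as error terms that are absorbed by the dominant positive commutator contributions for $\tau$ large. The overall architecture will be: reduce to ``generalised conformal polar'' variables in which the principal part looks like $\p_t^2 + \D_{S^{n-1}} + (n-2)\p_t$ plus a Lipschitz perturbation, conjugate with $e^{\tau\phi}$, split into symmetric and antisymmetric parts $S+A$, and compute the commutator.

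First, I would pass to geodesic polar coordinates adapted to $a^{ij}$, following Aronszajn--Krzywicki--Szarski (as signalled in the introduction). By (A1)--(A3), near $0$ one can construct a Lipschitz diffeomorphism so that in the new coordinates (still called $x$) one has $a^{ij}(x)x_j = x^i$, hence $a^{ij}x_ix_j = |x|^2$, while (A2) is preserved up to a multiplicative constant. Writing $x=e^t\theta$ with $\theta\in S^{n-1}$ and conjugating with $e^{-\frac{n-2}{2}t}$ as in Step 1 of the model case then gives, for $\tilde v(t,\theta):=e^{-\frac{n-2}{2}t}u(e^t\theta)$,
\begin{equation*}
\Bigl(\p_t^2+\D_{S^{n-1}}-\tfrac{(n-2)^2}{4}+e^{2t}\frac{\tilde f_q(t,\theta,\tilde u)}{\tilde u}\Bigr)\tilde v + \mathcal{E}\tilde v = \tilde g,
\end{equation*}
where $\mathcal{E}$ is a second-order differential operator in $(t,\theta)$ whose coefficients are $O(e^t)$ by the Lipschitz bound on $a^{ij}-\delta^{ij}$ (and hence small for $t\ll 0$, i.e.\ close to the origin). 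This replaces equation \eqref{eq:eq1}.

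Next, I would conjugate with $e^{\tau\phi}$, $\phi(t)=\psi(e^t)$, and decompose the conjugated operator as $L_\phi=S+A+\mathcal{E}_\tau$, where $S$ and $A$ are the model symmetric/antisymmetric parts from \eqref{eq:separate} and $\mathcal{E}_\tau$ is the conjugation of $\mathcal{E}$. The identity \eqref{eq:expand} acquires an extra term $2(\mathcal{E}_\tau v,Sv+Av)+\|\mathcal{E}_\tau v\|_{L^2}^2$. Because $\mathcal{E}_\tau$ has an extra factor $e^t$ from the Lipschitz bound and loses at most one power of $\tau$ from the conjugation, it is bounded (via Cauchy--Schwarz and $\varphi''\gtrsim (1+\ln^2|x|)^{-1}$) by
\begin{equation*}
C e^t\bigl(\tau^{3/2}\||\varphi''|^{1/2}v\|_{L^2}+\tau^{1/2}\||\varphi''|^{1/2}\p_t v\|_{L^2}+\tau^{1/2}\||\varphi''|^{1/2}\nabla_{S^{n-1}}v\|_{L^2}\bigr)^2,
\end{equation*}
which for $t$ in the support of $v$ (i.e.\ $|x|\leq r_0\ll 1$) is absorbed by the standard positive commutator terms \eqref{eq:standard_Carl} for $\tau\geq \tau_0$ large.

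The new genuinely nonlinear step is the commutator with $h_q(t,\theta,\tilde u):=e^{2t}\tilde f_q(t,\theta,\tilde u)/\tilde u$. Writing $\p_t\tilde f_q(t,\theta,\tilde u)=(e^t\theta\cdot\nabla_xf_q)(x,\tilde u)+\p_sf_q(x,\tilde u)\p_t\tilde u$, the computation of \eqref{eq:comm_0}--\eqref{eq:comm2} acquires an extra term of the form $\tau e^t\varphi'(e^t\theta\cdot\nabla_x F_q)(x,\tilde u)$ upon integration by parts in $t$. By condition (F3), $|\nabla_x F_q|\leq \kappa_1 F_q$, so this error is bounded by $C\tau e^t\varphi' (F_q(x,\tilde u),v^2)$; combined with $F_q(x,\tilde u)\geq \tfrac{\kappa_2}{\epsilon_0^q}|\tilde u|^q$ (from (F1)--(F2)) and the factor $e^t\ll 1$, it is absorbed into the main positive sublinear contribution $\tau\tfrac{2-q}{q}(e^{2t}(\varphi'')|\tilde u|^{q-2}v,v)$ appearing in \eqref{eq:comm2}. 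The condition (F4) is needed to justify the integration by parts and to handle $s\p_s f_q$ when differentiating the $|\tilde u|^{q-2}$-type factor. Step~3 of the model proof (testing $S$ against $\tau c_q\varphi'' v$ to upgrade to a full gradient estimate) then goes through unchanged, modulo the same absorbable Lipschitz error terms.

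The main obstacle I anticipate is the bookkeeping in the nonlinear commutator: the identity $|\tilde u|^{q-2}\tilde u\,\p_t\tilde u=\p_t(|\tilde u|^q/q)$ used in \eqref{eq:comm} is no longer available directly because $h_q$ now depends on $(t,\theta)$ explicitly through $x=e^t\theta$. The cleanest way to handle this, which I would adopt, is to rewrite everything in terms of $F_q$: use $\p_t\hat F_q(u)=(e^t\theta\cdot\nabla_xF_q)(x,\tilde u)+f_q(x,\tilde u)\p_t\tilde u$, integrate by parts in $t$ against $e^{2t}\varphi'e^{2\tau\varphi}$, and systematically estimate all $\nabla_xF_q$ and $\nabla_xf_q$ remainders via (F3). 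Provided this accounting is done, assumption (F1) supplies the sign needed so that the dominant $(q-2)$-term combines with $\tau^2(\varphi')^2$ from the symmetric part to reproduce, up to the Lipschitz perturbations, the lower bound of the model estimate \eqref{eq:est_wo_grad}, yielding \eqref{eq:vCarl}.
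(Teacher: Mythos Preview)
Your overall architecture matches the paper's: AKS-type polar coordinates, conjugation with $e^{\tau\varphi}$, an $S+A$ splitting with the sublinear potential kept in $S$, and a commutator computation in which the $x$-dependence of $f_q$ produces $\nabla_x F_q$ and $\nabla_x f_q$ remainders controlled via (F3). Your treatment of the nonlinear commutator --- rewriting everything through $F_q$, using $\partial_t \hat F_q(\tilde u) = (e^t\theta\cdot\nabla_x F_q)(x,\tilde u) + f_q(x,\tilde u)\partial_t\tilde u$, integrating by parts in $t$, and invoking (F1)--(F4) for the sign and the error absorption --- is essentially what the paper does in its Step~3, including the role of (F4) in making $s\partial_s f_q$ well-defined.

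The one place your organization differs from the paper, and where your sketch is too quick, is the metric perturbation. You split off the entire deviation from the flat Laplacian as an additive second-order error $\mathcal{E}$ and then claim $\|\mathcal{E}_\tau v\|_{L^2}^2$ is bounded by first-order norms. That fails as stated: the spherical part $\Delta_\Sigma-\Delta_{S^{n-1}}$ of $\mathcal{E}$ is second order in $\theta$, is untouched by the radial conjugation, and $\|e^t\partial_\theta^2 v\|_{L^2}$ cannot be controlled by $\tau^{1/2}\||\varphi''|^{1/2}\nabla_\theta v\|_{L^2}$. The paper avoids this by \emph{not} splitting off the spherical Laplacian: it keeps the full variable-coefficient $\Delta_\Sigma$ (with metric $b_{kl}$) inside $S$, so that the only additive error is the zeroth-order multiplier $E=-\tau\varphi'\partial_t\ln\sqrt{b}=O(\tau e^t)$, which is trivially absorbable. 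The price is that the mixed term $(Sv,\tilde A v)$ must be expanded by hand (their Step~2) rather than read off from the model case; this produces extra first-order contributions such as $\tau(\varphi'\partial_t(b\sqrt{b})\nabla_\theta v,\nabla_\theta v)_{L^2}$, which are again $O(e^t)$ and absorbed after the full gradient is recovered in Step~4. Your perturbative route can be made to work (the paper itself remarks that a Koch--Tataru style argument is an alternative), but it requires absorbing the second-order spherical error into $\|Sv\|_{L^2}^2$ rather than into the commutator, and your displayed bound does not capture that.
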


\begin{rmk}
\begin{itemize}
\item There are two main restrictions, which determine the size of the radius $r_0=r_0(q,n,\lambda, \Lambda,\Lambda_0,\kappa_1,\kappa_2)>0$ in the theorem: Firstly, we chose it so small that we may pass to suitable ``geodesic normal coordinates" in it. Secondly, we possibly impose even further restrictions on its size by requiring it to be sufficiently small in order to absorb some of the error terms, which arise in the proof of the Carleman estimate, into the leading order contributions of the Carleman estimate. This yields a dependence of $r_0$ on $n$ and $q$.
The smallness of $r_0$ is no restriction, since the UCP is a local property of an equation.
\item The proof of Theorem \ref{prop:varmet} illustrates that there are no additional difficulties in proving the Carleman estimate if additional lower order contributions are included in (\ref{eq:model_1}) as long as the coefficients remain bounded. For instance, it would have been possible to include bounded gradient potentials in our discussion.
\end{itemize}
\end{rmk}

In order to prove this low regularity, variable coefficient Carleman estimate, we use the coordinates introduced by Aronszajn, Krywicki and Szarski in \cite{AKS62}, who had introduced a ``replacement" of ``geodesic normal coordinates" in the presence of Lipschitz continuous metrics.
We recall this briefly in Section \ref{sec:AKS62}. Based on these ideas we introduce the corresponding ``geodesic polar coordinates" and carry out a similar conjugation argument as in the proof of the Carleman inequality in the model case (c.f. Section \ref{sec:Carl_var}). As explained in Section \ref{sec:proofs}, the proof of Theorem \ref{thm:SMUCP_var} then follows along the same lines as in the model situation.

\subsection{The coordinates of Aronszajn, Krzywicki and Szarski}
\label{sec:AKS62}

A priori the introduction of suitable geodesic coordinates poses difficulties in the case of Lipschitz metrics since the ODE system describing the geodesics does not posses well-defined, sufficiently regular solutions. Hence, we pursue a slightly different strategy following the ideas of Aronszajn, Krzywicki and Szarski \cite{AKS62}, who had found a way of introducing suitable ``geodesic normal coordinates" in a slightly different way also in the presence of Lipschitz continuous metrics. With these coordinates at hand, we then carry out a similar conjugation procedure as in the model setting in Section \ref{sec:model} from above. An alternative approach of dealing with the Lipschitz metrics as perturbations of constant coefficient metrics would also have been possible, c.f. \cite{KT01}. \\

We seek to present the proof of the variable coefficient Carleman estimate from Theorem \ref{prop:varmet} in a way which on the one hand avoids lengthy calculations and which on the other hand follows the arguments from Section \ref{sec:CarlI} as closely as possible. To this end, in the sequel we briefly recall a convenient change of coordinates due to Aronszajn, Krzywicki and Szarski \cite{AKS62}. Starting from a metric tensor $a_{ij}$ in a neighbourhood of the origin $B_{r_0}\subset \R^{n}$ which satisfies the conditions (A1)-(A3) from above, the authors of \cite{AKS62} introduce the following ``radial'' coordinate and a modified metric:
\begin{align}
& r=r(x):=(a_{ij}(0)x^i x^j)^{\frac{1}{2}},\label{eq:newmetric1}\\
& \tilde{a}_{ij}(x) := a_{ij}(x)\Psi(x), \label{eq:newmetric2}
\end{align}
where  
\begin{equation}\label{eq:newmetric3}
\Psi(x)=a^{k l}(x)\frac{\p r}{\p x^{k}} \frac{\p r}{\p x^{l}} \quad \text{for }x\neq 0,\quad \Psi(0)=1.
\end{equation}
and $(a^{kl})=(a_{kl})^{-1}$ is the inverse matrix.
Note that it is immediate from the uniform ellipticity that 
$$\frac{\lambda}{\Lambda}\leq \Psi(x)\leq \frac{\Lambda}{\lambda},$$
and that by definition $\Psi$ is Lipschitz continuous.

With these auxiliary quantities at hand, Aronszajn, Krywicki and Szarski construct the following replacement of geodesic polar coordinates:

\begin{prop}[\cite{AKS62}, Sections III, IV] 
\label{prop:AKS62}
In the ellipsoid 
$$\tilde{B}_{\tilde{r}_0}:=\{x\in \R^{n}:r(x)<\tilde{r}_0\}\subset B_{r_0} \quad \tilde{r}_0=r_0\sqrt{\lambda}$$ the following properties hold:
\item[(i)] $\tilde{a}_{ij}$ is uniformly elliptic with $\tilde{\lambda}=\lambda^2/\Lambda$, $\tilde{\Lambda}=\Lambda^2/\lambda$.
\item[(ii)] $\tilde{a}_{ij}$ is Lipschitz with Lipschitz constant
$\tilde{\Lambda}_0$ depending on $\Lambda_0$, $\lambda$ and $\Lambda$. 
\item[(iii) (Polar coordinates)] Let $\Sigma:=\partial\tilde{B}_{\tilde{r}_0}$. Then one can parametrize $\tilde{B}_{\tilde{x}_0}\setminus \{0\}$ by $r$ and $\theta$, with $r=r(x)$ defined in \eqref{eq:newmetric1} and $\theta=(\theta^1, \cdots,\theta^n)$ being a choice of local coordinates of $\Sigma$. In these coordinates, the metric turns into
\begin{align*}
\tilde{a}_{ij} dx^i dx^j=dr^2+r^2b_{k l}d \theta^k d \theta^{l} \text{ with } b_{k l}(r,\theta)=\frac{1}{r^2}\tilde{a}_{ij}\frac{\partial x^i}{\partial \theta ^k}\frac{\partial x^j}{\partial \theta^l}.
\end{align*}
\item[(iv)] There exists a constant $M=M(\lambda, \Lambda, \Lambda_0)$ such that for any tangent vector $\sigma\in T_{\theta}(\Sigma)$,
\begin{equation}\label{eq:polar_b}
\left|\frac{\partial b_{k l}(r,\theta)}{\partial r}\sigma^k\sigma^l\right|\leq M |b_{k l}(r,\theta)\sigma^k\sigma^l|.
\end{equation}
In particular, if we let $b:=\det(b_{k l})$, then \eqref{eq:polar_b} implies that
\begin{equation}\label{eq:logpolar_b}
\left|\frac{\partial(\ln(\sqrt{b})}{\partial r}\right|\leq \frac{nM}{2}.
\end{equation}
\end{prop}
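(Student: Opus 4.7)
The plan is to verify the four items directly from the definitions (\ref{eq:newmetric1})--(\ref{eq:newmetric3}). Throughout, two algebraic identities drive the computation: the pointwise relation $a^{kl}(0)\partial_k r\,\partial_l r\equiv 1$ on $\R^n\setminus\{0\}$, which follows from the symmetry of $a(0)$ together with the definition $r^2=a_{ij}(0)x^ix^j$; and the eikonal equation $\tilde a^{ij}\partial_i r\,\partial_j r\equiv 1$, which is immediate from $\tilde a^{ij}=a^{ij}/\Psi$ once one recalls that $\Psi = a^{kl}\partial_k r\partial_l r$.

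Parts (i) and (ii) are then short calculations. For (i), from $\partial_k r= a_{km}(0)x^m/r$ and the ellipticity of $a(0)$ I obtain $\lambda\le|\nabla r|^2_{\mathrm{Eucl}}\le\Lambda$, and inserting this into the ellipticity bounds on $a^{kl}$ yields $\lambda/\Lambda\le\Psi\le\Lambda/\lambda$; multiplying with the original bounds on $a_{ij}$ produces the stated ellipticity constants for $\tilde a_{ij}=\Psi a_{ij}$. For (ii), the first identity above lets me rewrite $\Psi(x)-1=[a^{kl}(x)-a^{kl}(0)]\,\partial_k r\,\partial_l r$. Since $\partial_k r\,\partial_l r$ is bounded and $0$-homogeneous (with gradient $O(1/r)$), while the factor $a^{kl}(x)-a^{kl}(0)=O(\Lambda_0 |x|)$ absorbs this singularity, the gradient of $\Psi$ is bounded uniformly in $\tilde B_{\tilde r_0}\setminus\{0\}$. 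Hence $\Psi$ is Lipschitz up to and including the origin, and $\tilde a = \Psi\cdot a$ is Lipschitz as a product.

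For (iii) I invoke the Gauss-lemma construction. Let $X^i := \tilde a^{ij}\partial_j r$ be the $\tilde a$-gradient of $r$; the eikonal identity gives $X(r)=|\nabla r|^2_{\tilde a}=1$, so $r$ itself is arclength along the integral curves of $X$. I transport a choice of local coordinates $\theta$ on $\Sigma$ inward along these curves, obtaining a diffeomorphism $(r,\theta)\mapsto x(r,\theta)$ of $(0,\tilde r_0]\times\Sigma$ onto $\tilde B_{\tilde r_0}\setminus\{0\}$, provided $\tilde r_0$ is taken small enough that the trajectories do not focus before $r=0$. In these coordinates $\partial_r=X$; moreover, for any $Y$ tangent to a level set $\{r=\mathrm{const}\}$ one has $\tilde a(X,Y)=Y^i\partial_i r=Y(r)=0$. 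Combined with $|\partial_r|^2_{\tilde a}=1$ this splits the metric as $\tilde a_{ij}dx^idx^j = dr^2+g_{kl}(r,\theta)d\theta^kd\theta^l$ with $g_{kl}=\tilde a_{ij}\partial_kx^i\partial_lx^j$, and writing $g_{kl}=r^2 b_{kl}$ absorbs the expected $r^2$-scaling inherited from the fact that $\tilde a(0)=I$ forces $b_{kl}$ to tend to a fixed non-degenerate angular metric as $r\to 0$.

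The main technical obstacle is part (iv), which requires an exact cancellation. Writing $V^i=\sigma^k\partial x^i/\partial\theta^k$ so that $r^2b_{kl}\sigma^k\sigma^l=\tilde a(V,V)$, I differentiate in $r$ using $[\partial_r,\partial_{\theta^k}]=0$:
\[
\partial_r\tilde a(V,V) = (\partial_r\tilde a_{ij})V^iV^j + 2\tilde a_{ij}V^i(\partial_lX^j)V^l.
\]
The first summand is $O(\tilde\Lambda_0)\tilde a(V,V)$ by the Lipschitz continuity of $\tilde a$ and the bound $|X|\le\tilde\lambda^{-1/2}$. For the second, a direct computation gives $\partial_l\partial_j r=[a_{jl}(0)-\xi_j\xi_l]/r$ with $\xi_j:=\partial_jr$; since $V$ is tangent to the level set of $r$ one has $V^l\xi_l=0$, which kills the singular $\xi_j\xi_l/r$ contribution and leaves $2\tilde a_{ij}V^i(\partial_lX^j)V^l = (2/r)a_{il}(0)V^iV^l + O(\tilde\Lambda_0)\tilde a(V,V) = (2/r)\tilde a(V,V) + O(1)\tilde a(V,V)$, where I use $\tilde a(0)=a(0)$ and the Lipschitz estimate to pass between $a_{il}(0)V^iV^l$ and $\tilde a(V,V)$ at a cost of $O(r)\tilde a(V,V)$. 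Comparing with $\partial_r(r^2 b_{kl}\sigma^k\sigma^l) = 2rb_{kl}\sigma^k\sigma^l+r^2\partial_rb_{kl}\sigma^k\sigma^l$, the $2/r$ term on the right exactly matches the $2rb_{kl}$ contribution, and I am left with $|\partial_rb_{kl}\sigma^k\sigma^l|\le M\,b_{kl}\sigma^k\sigma^l$ for some $M=M(\lambda,\Lambda,\Lambda_0)$. The trace estimate (\ref{eq:logpolar_b}) then follows by applying (\ref{eq:polar_b}) to a $b$-orthonormal frame. The crux is this precise cancellation: without the eikonal identity forcing $r$ to be \emph{exactly} an $\tilde a$-arclength parameter (which is what the choice of $\Psi$ is engineered to achieve), the $1/r$ singularities would not match and one would only obtain a blow-up rather than the clean multiplicative bound.
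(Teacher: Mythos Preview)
The paper does not actually prove this proposition; it is quoted as a black box from \cite{AKS62} (Sections III, IV) and then used in the subsequent Carleman argument. So there is no ``paper's proof'' to compare against here.

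That said, your sketch is correct in outline and captures precisely the mechanism behind the Aronszajn--Krzywicki--Szarski construction. The algebraic identity $a^{kl}(0)\partial_k r\,\partial_l r\equiv 1$ and the engineered eikonal equation $\tilde a^{ij}\partial_i r\,\partial_j r\equiv 1$ are exactly the two ingredients that make (i)--(iii) work and, more importantly, that force the $1/r$ cancellation in (iv). Your computation of $\partial_l\partial_j r=[a_{jl}(0)-\xi_j\xi_l]/r$ together with $V^l\xi_l=0$ is the heart of the matter, and the passage from $a_{il}(0)V^iV^l$ to $\tilde a(V,V)$ at cost $O(r)\tilde a(V,V)$ via $\tilde a(0)=a(0)$ is the right way to close.

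One point that would need more care in a fully rigorous write-up: since $\tilde a$ is only Lipschitz, the vector field $X=\tilde a^{-1}\nabla r$ is only Lipschitz, so its flow is only bi-Lipschitz and the coordinate change $x=x(r,\theta)$ need not be $C^2$. Your use of $[\partial_r,\partial_{\theta^k}]=0$ and of $\partial_l X^j$ therefore has to be interpreted almost everywhere via the variational equation for flows of Lipschitz vector fields; this is standard but should be flagged. Also, the caveat ``provided $\tilde r_0$ is small enough that trajectories do not focus'' is unnecessary: since $X(r)=1$, $r$ is strictly monotone along integral curves, so by ODE uniqueness there is no focusing at any positive radius.
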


In other words, the existence of the coordinates $(r,\theta)$, which is one of the central insights of the paper of Aronszajn, Krywicki and Szarski \cite{AKS62}, permits us to pass to ``geodesic polar coordinates" without explicitly making use of the system of ODEs defining the exponential map -- which, due to the low regularity of the metric, would not necessarily yield the desired choice of coordinates.

\subsection{Proof of Theorem \ref{prop:varmet}}
\label{sec:Carl_var}
With the conformal polar coordinates of \cite{AKS62} at hand, we discuss the proof of the Carleman estimate from Theorem \ref{prop:varmet}. In order to use these coordinates efficiently and to switch to the associated conformal polar coordinates, we rewrite our equation as a Laplace-Beltrami operator on the underlying manifold. This has the advantage that changes of coordinates can be easily computed.

\begin{lem}
\label{lem:geo}
Let $a^{ij}$ satisfy the assumptions (A1)-(A3) and let $\tilde{a}_{ij}$ be as \eqref{eq:newmetric2} in Section \ref{sec:AKS62}. Then the following are equivalent:
\begin{itemize}
\item[(i)] $u$ is a solution to
\begin{align*}
\p_i a^{ij} \p_j u + \hat{f}_q(u) = g \mbox{ in } \R^n.
\end{align*}
\item[(ii)] $u$ is a solution to
\begin{align}
\label{eq:LB}
\D_{\tilde{a}^{ij}} u + \frac{1}{\Psi} \hat{f}_q(u) 
= \frac{g}{\Psi} + \frac{1}{2 \Psi \tilde{a}} a^{ij} (\p_{x_i} \tilde{a}) \p_{x_j}u - a^{ij}\frac{\p_{x_i}\Psi}{\Psi^2} \p_{x_j}u=: \frac{g}{\Psi} + R=:h,
\end{align}
where $\D_{\tilde{a}^{ij}}$ denotes the Laplace-Beltrami operator with respect to the metric $\tilde{a}^{ij}$, $\tilde{a}:=\det(\tilde{a}_{ij})$ and $\Psi$ denotes the function from \eqref{eq:newmetric3} in Section \ref{sec:AKS62}.
\end{itemize}
\end{lem}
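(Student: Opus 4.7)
The plan is to verify (i) $\Leftrightarrow$ (ii) by a direct algebraic computation, exploiting the fact that the passage from $a_{ij}$ to $\tilde a_{ij}=\Psi a_{ij}$ is a pointwise \emph{conformal} rescaling by the \emph{scalar} factor $\Psi$. Since only a scalar multiplication is involved, one has $\tilde a^{ij}=\Psi^{-1}a^{ij}$ and $\tilde a:=\det(\tilde a_{ij})=\Psi^{n}a$ with $a:=\det(a_{ij})$. These two identities are the only structural input needed; the rest is bookkeeping, and regularity is not an issue, since $\Psi,\tilde a\in W^{1,\infty}$ by (A2) together with the uniform lower bound $\Psi\geq\lambda/\Lambda>0$.

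First I would expand the Laplace--Beltrami operator in terms of the original data, using $\sqrt{\tilde a}=\Psi^{n/2}\sqrt a$:
$$\Delta_{\tilde a^{ij}}u=\frac{1}{\sqrt{\tilde a}}\,\partial_i\bigl(\sqrt{\tilde a}\,\tilde a^{ij}\partial_j u\bigr)=\frac{1}{\Psi^{n/2}\sqrt a}\,\partial_i\bigl(\Psi^{n/2-1}\sqrt a\,a^{ij}\partial_j u\bigr).$$
Distributing the divergence produces one term proportional to $\partial_i(a^{ij}\partial_j u)$, which assumption (i) rewrites as $g-\hat f_q(u)$, plus two first-order terms in which the derivative falls on $\Psi^{n/2-1}$ or on $\sqrt a$. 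Dividing through by $\Psi^{n/2}\sqrt a$, the leading contribution is $\Psi^{-1}(g-\hat f_q(u))$, which accounts for the terms $g/\Psi$ and $-\hat f_q(u)/\Psi$ appearing in (ii).

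Next I would collect the remaining first-order pieces and identify them with $R$. Using $\partial_i(\Psi^{n/2-1})/\Psi^{n/2}=(n/2-1)\Psi^{-2}\partial_i\Psi$ and $\partial_i\sqrt a/\sqrt a=\tfrac12\,a^{-1}\partial_i a$, the remainder reduces to
$$\Bigl[(n/2-1)\tfrac{\partial_i\Psi}{\Psi^{2}}+\tfrac{\partial_i a}{2\Psi a}\Bigr]a^{ij}\partial_j u.$$
On the other hand, from $\tilde a=\Psi^{n}a$ one has $\partial_i\tilde a/\tilde a=n\partial_i\Psi/\Psi+\partial_i a/a$, so the right-hand side $R$ of (ii) unfolds as
$$\frac{n\,a^{ij}\partial_i\Psi}{2\Psi^{2}}\partial_j u+\frac{a^{ij}\partial_i a}{2\Psi a}\partial_j u-\frac{a^{ij}\partial_i\Psi}{\Psi^{2}}\partial_j u=(n/2-1)\frac{a^{ij}\partial_i\Psi}{\Psi^{2}}\partial_j u+\frac{a^{ij}\partial_i a}{2\Psi a}\partial_j u,$$
which coincides with the bracketed expression above. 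This establishes (i)$\Rightarrow$(ii); the reverse implication is automatic because every step is a pointwise (in fact a.e.) identity which can be read backwards.

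The main obstacle is not conceptual but combinatorial: one must correctly track the two competing $\Psi^{n/2}$ factors produced by $\sqrt{\tilde a}$ and by $\tilde a^{ij}$, so that the Lipschitz derivatives of $\Psi$ and $a$ recombine precisely into the form of $R$ stated in the lemma. No assumption on $f_q$ from (F1)--(F4) is used — the nonlinear term is simply carried along as a scalar source, which is why the equivalence holds in the stated generality.
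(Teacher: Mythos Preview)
Your proposal is correct and is exactly the ``direct calculation'' the paper alludes to but omits: expand $\Delta_{\tilde a^{ij}}$ using $\tilde a^{ij}=\Psi^{-1}a^{ij}$ and $\sqrt{\tilde a}=\Psi^{n/2}\sqrt a$, substitute $\partial_i(a^{ij}\partial_j u)=g-\hat f_q(u)$, and match the first-order remainder with $R$ via $\partial_i\tilde a/\tilde a=n\partial_i\Psi/\Psi+\partial_i a/a$. There is nothing to add; your bookkeeping is accurate and your observation that (F1)--(F4) play no role here is also correct.
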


We omit the proof of this equivalence, as it follows from a direct calculation. Instead, we turn to the proof of Theorem \ref{prop:varmet}, for which we will rely on the geometric formulation of the Carleman estimate.

\begin{proof}[Proof of Theorem \ref{prop:varmet}]
\emph{Step 1: Choice of coordinates. }
Relying on Lemma \ref{lem:geo}, we prove a Carleman estimate for the operator $L u= \D_{\tilde{a}^{ij}} u + \frac{1}{\Psi} \hat{f}_q(u)$. The terms on the right hand side in Lemma \ref{lem:geo} (ii) will be treated as error contributions and will eventually be absorbed into the left hand side of the Carleman inequality (c.f. Step 5 below). 

Consider the geodesic polar coordinates $(r,\theta)\in (0,1)\times \Sigma$ from Proposition \ref{prop:AKS62} (iii), where $r(x)=|x|$ (since $a_{ij}(0)=\delta_{ij}$ by the normalization assumption (A3)) and $\theta$ are suitable coordinates of $\Sigma$.
By definition of the coordinates from \cite{AKS62}, we infer
\begin{align*}
\Delta_{\tilde{a}_{ij}}=&\frac{1}{r^n\sqrt{b}}\dr(r^{n}\sqrt{b}\dr) +\frac{1}{r^2}\Delta_{\Sigma}, \quad \sqrt{\tilde{a}}dx=r^n\sqrt{b}\ drd\theta,
\end{align*}
where 
$$\Delta_{\Sigma}=\frac{1}{\sqrt{b}}\p_{\theta_k} \left(b^{k l}\sqrt{b}\ \p_{\theta_{l}}\right), \quad b=\det(b_{k l}), \quad (b^{k l})=(b_{k l})^{-1}.$$ 

Next we carry out a change into conformal coordinates, i.e. $x=e^{t}\theta$, which in particular yields $\dr = e^{-t}\dt$. This resulting Laplace-Beltrami operator then reads
\begin{align*}
\D_{\tilde{a}_{ij}}=e^{-2t}\left[  \frac{1}{e^{(n-2)t}\sqrt{b}}\partial_t\left(e^{(n-2)t}\sqrt{b}\partial_t\right) +  \D_{\Sigma} \right].
\end{align*}

We conjugate the operator $\D_{\tilde{a}^{ij}}$ with the weight $e^{-\frac{n-2}{2}t}$ which leads to the representation
\begin{align*}
e^{\frac{n+2}{2}t}\D_{\tilde{a}_{ij}}e^{-\frac{n-2}{2}t}=\frac{1}{\sqrt{b}}\partial_t(\sqrt{b}\partial_t)-\left(\frac{n-2}{2}\right)^2 +  \D_{\Sigma}.
\end{align*} 
Hence, our equation \eqref{eq:LB} becomes
\begin{align}
\label{eq:eq_aux}
\left( \frac{1}{\sqrt{b}}\p_t(\sqrt{b}\p_t) - \left( \frac{n-2}{2} \right)^2 + \Delta_{\Sigma} + e^{2t} \frac{\hat{f}_q(\tilde{u})}{\Psi \tilde{u}} \right)\tilde{v} = \tilde{h},
\end{align}
where
\begin{align*}
\tilde{v}(t,\theta) = e^{- \frac{n-2}{2}t} u(e^{t}\theta), \
\tilde{h}(t,\theta) = e^{\frac{n+2}{2}t}h(e^t \theta), \
\tilde{u}(t,\theta) = u(e^t \theta).
\end{align*}

We conjugate \eqref{eq:eq_aux} with the weight $e^{\tau \varphi}$, where $\varphi(t) = \psi(e^t)$. The correspondingly conjugated operator turns into $L_{\varphi}=S+A$,
where
\begin{align}
\label{eq:sym_antisym}
\begin{split}
S & = \frac{1}{\sqrt{b}}\partial_t(\sqrt{b}\partial_t)+ \tau^2(\varphi')^2 - 
\left(\frac{n-2}{2}\right)^2 + \D_{\Sigma} +h_q(\tilde{u}),\\
A & = -2\tau \varphi'\dt - \tau \varphi''-\tau\varphi'\partial_t \ln(\sqrt{b}),
\end{split}
\end{align}
where $h_q(\tilde{u})= e^{2t} \frac{\hat{f}_q(\tilde{u})}{\Psi\tilde{u}}$.
We seek to derive the desired Carleman estimate by expanding the operator $L_{\varphi}$.\\

\emph{Step 2: Expansion of the operator $L_{\varphi}$.}
To estimate $\|L_\varphi u\|_{L^2}$, we use the splitting from \eqref{eq:sym_antisym} and expand the operator $L_{\varphi}$. Due to the low regularity of the metric, we do not directly phrase this as a commutator estimate, but morally it reduces to this.  

Due to the $t$-dependence of the volume element, we have an extra term $\tau\varphi'\partial_t \ln(\sqrt{b})$ in the antisymmetric part, whose $t$-derivative is not controlled. Thus, we treat this contribution as an error term, i.e. we split 
$$L_\varphi=S+\tilde{A}+E$$
where
\begin{align*}
\tilde{A}=-2\tau \varphi'\dt - \tau \varphi'', \quad E=-\tau\varphi'\partial_t \ln(\sqrt{b}).
\end{align*}
By virtue of the triangle inequality 
\begin{equation}\label{eq:tri}
\|(S+\tilde{A})v\|_{L^2_{\vol}}\leq \|L_\varphi v\|_{L^2_{\vol}}+\|Ev\|_{L^2_{\vol}}.
\end{equation}  
Here we have set $v = e^{\tau \varphi} \tilde{v}$ and, for simplicity of notation, we have abbreviated
\begin{align*}
\|\cdot \|_{L^2_{\vol}}= \|\cdot\|_{L^2_{\vol}(\R \times \Sigma)}=\|\cdot\|_{L^2(\R \times \Sigma, \sqrt{b}d \theta dt)}.
\end{align*}
The corresponding scalar product will be denoted by $(\cdot, \cdot)_{L^2_{\vol}}$.

We first notice that
\begin{align}
\label{eq:split11}
\|(S+\tilde{A})v\|^2_{L^2_{\vol}}=\|Su\|^2_{L^2_{\vol}} +\|\tilde{A}v\|^2_{L^2_{\vol}}+ 2(Su, \tilde{A}v)_{L^2_{\vol}},
\end{align}
We now estimate the contributions in $(S v, \tilde{A}v)_{L^2_{\vol}}$, which we split into three parts, which we consider separately:
\begin{align}
\label{eq:mixed}
\begin{split}
(S v, \tilde{A} v)_{L^2_{\vol}}
&= -2 \tau (\p_t (\sqrt{b} \p_t ) v + \tau^2 \sqrt{b}(\varphi')^2 v + \D_{\Sigma}' v - \sqrt{b}\frac{(n-2)^2}{4} v,  \varphi' \p_t v)_{L^2}\\
& \quad - \tau (\p_t (\sqrt{b} \p_t ) v + \tau^2 \sqrt{b}(\varphi')^2 v + \D_{\Sigma}' v - \sqrt{b}\frac{(n-2)^2}{4} v, \varphi'' v)_{L^2}\\
& \quad + ([h_q(\tilde{u})\sqrt{b}, -2\tau \varphi' \p_t - \tau \varphi''] v, v)_{L^2}.
\end{split}
\end{align}
Here we now consider the standard scalar product, i.e., $(\cdot,\cdot)_{L^2}:=(\cdot,\cdot)_{L^2(\R \times \Sigma)}$, and we have set $\D_{\Sigma'} := \sqrt{b} \D_{\Sigma}$.
We begin by discussing the first two contributions in \eqref{eq:mixed}, which do not involve the sublinear part of the problem. Hence, the main difficulty with these is to deal with the low regularity of the metric. To this end we compute,
\begin{align}
\label{eq:first_term}
\begin{split}
&-2 \tau \left(\p_t (\sqrt{b} \p_t ) v + \tau^2 \sqrt{b}(\varphi')^2 v + \D_{\Sigma}' v - \frac{(n-2)^2}{4}\sqrt{b} v,  \varphi' \p_t v\right)_{L^2}\\
&= -2\tau \left(\sqrt{b} \p_t^2 v + \tau^2 \sqrt{b}(\varphi')^3 v + \D_{\Sigma}' v- \frac{(n-2)^2}{4}\sqrt{b} v,  \varphi' \p_t v\right)_{L^2}
- \tau \left(\frac{b'}{\sqrt{b}}\p_t v, \varphi' \p_t v \right)_{L^2}\\
& = -\tau (\varphi' \sqrt{b}, \p_t(\p_t v)^2)_{L^2}
- \tau^3 (\sqrt{b} (\varphi')^2, \p_t (v^2))_{L^2}
- \tau (\frac{b'}{\sqrt{b}} \p_t v, \varphi' \p_t v)_{L^2}\\
& \quad - 2\tau (\D_{\Sigma}' v, \varphi' \p_t v)_{L^2} +\tau \frac{(n-2)^2}{4}(\varphi' \sqrt{b}, \p_t(v^2))_{L^2}\\
& = \tau ((\p_t v)\p_t (\varphi' \sqrt{b}), \p_t v)_{L^2}
+ \tau^3 (v \p_t (\sqrt{b} (\varphi')^3), v)_{L^2}
-\tau (\frac{b'}{\sqrt{b}} \p_t v, \varphi' \p_t v)_{L^2}
\\
& \quad + 2\tau (\nabla_{\theta}v, b \sqrt{b} \varphi' \p_t \nabla_{\theta} v)_{L^2}- \tau \frac{(n-2)^2}{4}(\varphi'' \sqrt{b} v, v)_{L^2} - \frac{\tau}{2} \frac{(n-2)^2}{4}(\varphi' \frac{b'}{\sqrt{b}} v, v)_{L^2}\\
& = \tau (\sqrt{b} \varphi'' \p_t v, \p_t v)_{L^2}
+ \frac{\tau}{2}(\varphi' \frac{b'}{\sqrt{b}}\p_t v, \p_t v)_{L^2}
+ 3 \tau^3 (\sqrt{b} \varphi'' \varphi' v, \varphi' v)_{L^2}
+ \frac{\tau^3}{2} ((\varphi')^3 \frac{b'}{\sqrt{b}} v, v)_{L^2}\\
& \quad - \tau(\varphi'' \nabla_{\theta}v, b \sqrt{b} \nabla_{\theta}v)_{L^2}
- \tau(\varphi' (\p_t(b \sqrt{b})) \nabla_{\theta}v, \nabla_{\theta} v)_{L^2}
- \tau (\frac{b'}{\sqrt{b}}\p_t v, \varphi' \p_t v)_{L^2}\\
& \quad -\tau \frac{(n-2)^2}{4}(\varphi'' \sqrt{b} v, v)_{L^2} - \frac{\tau}{2} \frac{(n-2)^2}{4}(\varphi' \frac{b'}{\sqrt{b}} v, v)_{L^2}.
\end{split}
\end{align}
Here, for ease of notation, we have abbreviated $b'(t,\theta):=\p_t b(t,\theta)$.
Next we consider the second contribution from \eqref{eq:mixed}. It turns into
\begin{align}
\label{eq:second_term}
\begin{split}
&- \tau \left(\sqrt{b} \p_t^2 v + \sqrt{b}\tau^2 (\varphi')^2 v + \D_{\Sigma}'v - \sqrt{b}\frac{(n-2)^2}{4}v, \varphi'' v \right)_{L^2} - \frac{\tau}{2}\left(\frac{b'}{\sqrt{b}} \p_t v, \varphi'' v\right)_{L^2}\\
& =  \frac{\tau}{2}(\frac{b'}{\sqrt{b}} \p_t v, \varphi'' v)_{L^2} + \tau (\sqrt{b} \p_t v, \varphi'' \p_t v)_{L^2}
+ \tau (\sqrt{b} \p_t v, \varphi''' v)_{L^2}
- \tau^3 (\sqrt{b} (\varphi')^2 \varphi'' v, v)_{L^2} \\
& \quad + (\nabla_{\theta} v, \varphi'' b \sqrt{b} \nabla_{\theta} v)_{L^2} -\frac{\tau}{2}(\frac{b'}{\sqrt{b}} \p_t v, \varphi'' v)_{L^2}  +\tau \frac{(n-2)^2}{4}(\varphi'' \sqrt{b} v, v)_{L^2}.
\end{split}
\end{align}
Combining the contributions from \eqref{eq:first_term}, \eqref{eq:second_term} leads to
\begin{align}
\label{eq:combined_12}
\begin{split}
&2 \tau (\sqrt{b} \varphi'' \p_t v, \p_t v)_{L^2}
+ 2 \tau^3(\sqrt{b} (\varphi')^2 \varphi'' v, v)_{L^2}\\
& - \frac{\tau}{2} (\varphi' \frac{b'}{\sqrt{b}} \p_t v, \p_t v)_{L^2}
+ \frac{\tau^3}{2} ((\varphi')^2 \frac{b'}{\sqrt{b}} v,v)_{L^2}
- \tau (\varphi' (\p_t(b \sqrt{b}))\nabla_{\theta} v, \nabla_{\theta} v)_{L^2} \\
&  + \tau (\sqrt{b} \p_t v, \varphi''' v)_{L^2}
- \frac{\tau}{2} \frac{(n-2)^2}{4}(\varphi' \frac{b'}{\sqrt{b}} v,v)_{L^2}.
\end{split}
\end{align}
The support assumption $\supp(v) \subset \{(t,\theta) \in (-\infty, t_0)\times \Sigma \}$ for a sufficiently small choice of $t_0$ combined with the explicit form of $\varphi$, and the bound
\begin{align*}
|b'(t)| \leq C e^{t},
\end{align*}
then a sufficiently large choice of $\tau_0$ allows us to estimate the contributions in \eqref{eq:combined_12} by
\begin{align}
\label{eq:combined_12a}
\begin{split}
&2 \tau (\sqrt{b} \varphi'' \p_t v, \p_t v)_{L^2}
+ 2 \tau^3(\sqrt{b} (\varphi')^2 \varphi'' v, v)_{L^2}\\
&- \frac{\tau}{2} (\varphi' \frac{b'}{\sqrt{b}} \p_t v, \p_t v)_{L^2}
+ \frac{\tau^3}{2} ((\varphi')^2 \frac{b'}{\sqrt{b}}v,v)_{L^2}
+ \tau (\sqrt{b} \p_t v, \varphi''' v)_{L^2}
\\
& \quad - \tau (\varphi' (\p_t(b \sqrt{b}))\nabla_{\theta} v, \nabla_{\theta} v)_{L^2} 
- \frac{\tau}{2} \frac{(n-2)^2}{4}(\varphi' \frac{b'}{\sqrt{b}} v,v)_{L^2}\\ 
& \geq 
 \tau (\sqrt{b} \varphi'' \p_t v, \p_t v)_{L^2}
+  \tau^3(\sqrt{b} (\varphi')^2 \varphi'' v, v)_{L^2} 
-  \tau (\varphi' (\p_t(b \sqrt{b}))\nabla_{\theta} v, \nabla_{\theta} v)_{L^2}.
\end{split}
\end{align}
This shows the control of tangential gradients and $L^2$ contributions (with an error involving the spherical derivatives). In Step 4 we show that the potentially negative contribution involving the spherical gradient can be absorbed into positive contributions and that we can upgrade \eqref{eq:combined_12a} to an estimate for the full gradient (including the spherical part of the gradient), for which we exploit the symmetric part of the operator.\\

\emph{Step 3: Estimates for the sublinear contributions.}
For the terms involving the sublinear potential, we argue similarly as in the proof of the model situation.
For ease of notation we define for $v \in \R$
\begin{align*}
&\tilde{f}_q(v)(t,\theta):= f_q(e^t \theta,v), \ \p_1 \tilde{f}_q(v)(t,\theta) = (\p_t \tilde{f}_q)|_{(t,\theta,v)}, \ \tilde{f}'_q(v)(t,\theta):= (\p_{v} \tilde{f}_q)|_{(t,\theta,v)},\\
&\tilde{F}_q(v)(t,\theta):= f_q(e^t \theta,v), \ \p_1 \tilde{F}_q(v)(t,\theta) = (\p_t \tilde{f}_q)|_{(t,\theta,v)}, \ \tilde{F}'_q(v)(t,\theta):= (\p_{v} \tilde{f}_q)|_{(t,\theta,v)}.
\end{align*}
With this notation at hand and using that
\begin{align*}
[\sqrt{b} h_q(\tilde{u}), -2\tau \varphi' \p_t - \tau \varphi'']
= [\sqrt{b} h_q(\tilde{u}), -2\tau \varphi' \p_t ],
\end{align*}
we compute
\begin{align}
\label{eq:non_lin_comm}
\begin{split}
-2\tau [\sqrt{b} h_q(\tilde{u}), \varphi' \p_t ]
& = 2\tau \varphi' \p_t (h_q(\tilde{u}) \sqrt{b})\\
& = \frac{2\tau e^{2t} \varphi' \sqrt{b}}{\Psi}\left( \frac{\tilde{f}_q'(\tilde{u})}{\tilde{u}} - \frac{\tilde{f}_q(\tilde{u})}{\tilde{u}^2} \right) \p_t \tilde{u}
+ \frac{4\tau e^{2t} \varphi' \sqrt{b}}{\Psi} \frac{\tilde{f}_q(\tilde{u})}{\tilde{u}}
 \\
& \quad + \frac{2\tau e^{2t} \varphi' \sqrt{b}}{\Psi}\frac{\p_1 \tilde{f}_q(\tilde{u})}{\tilde{u}} + \frac{\tau e^{2t} \varphi'}{\Psi} \frac{b'}{\sqrt{b}} \frac{\tilde{f}_q(\tilde{u})}{\tilde{u}} - 2\tau e^{2t} \varphi' \sqrt{b} \frac{\Psi'}{\Psi^2} \frac{\tilde{f}_q(\tilde{u})}{\tilde{u}}\\
& = \frac{2\tau e^{2t}\varphi' \sqrt{b}}{\Psi} \left( \frac{\tilde{f}_q'(\tilde{u})}{\tilde{u}} - \frac{\tilde{f}_q(\tilde{u})}{\tilde{u}^2} \right) \p_t \tilde{u} 
+ \frac{4\tau e^{2t} \varphi' \sqrt{b}}{\Psi} \frac{\tilde{f}_q(\tilde{u})}{\tilde{u}}
 + E_1.
\end{split}
\end{align}
We treat $E_1$ as an error, which we will discuss below, and thus first concentrate on the other contribution.
Using that $v= e^{\tau \varphi}e^{-\frac{n-2}{2}t} \tilde{u}$ and that the condition (F4) ensures the well-definedness of $\tilde{u}\tilde{f}'_q(\tilde{u})$, we infer
\begin{align}
\label{eq:non_lin_comm_1}
\begin{split}
&-2 \tau ([\sqrt{b} h_q(\tilde{u}), \varphi' \p_t] v, v)
= 2\tau \left( \frac{e^{2t} \varphi' v \sqrt{b}}{\Psi} \left(  \frac{\tilde{f}_q'(\tilde{u})}{\tilde{u}} - \frac{\tilde{f}_q(\tilde{u})}{\tilde{u}^2} \right) \p_t \tilde{u} , v \right) 
+ 4\tau \left(\frac{ e^{2t} \varphi' \sqrt{b}}{\Psi} \frac{\tilde{f}_q(\tilde{u})}{\tilde{u}}v,v \right) \\
& \quad + (E_1 v,v)\\
&= 2\tau \left(\frac{l \sqrt{b}}{\Psi}, (\tilde{u} \tilde{f}_q'(\tilde{u}) - \tilde{f}_q(\tilde{u}))\p_t \tilde{u}\right)
+ 4q\tau \left(\frac{ e^{2t} e^{2\tau \varphi}\varphi' \sqrt{b}}{\Psi} \frac{\tilde{F}_q(\tilde{u})}{\tilde{u}^2}, \tilde{u}^2 \right) 
 + (E_1 v,v)\\
& = 2 \tau \left( \frac{l \sqrt{b}}{\Psi}, \p_t (\tilde{u} \tilde{f}_q(\tilde{u})) - 2 \tilde{F}'_q(\tilde{u})\p_t \tilde{u} \right) 
+ 4q\tau \left(\frac{ e^{2t} e^{2\tau \varphi}\varphi' \sqrt{b}}{\Psi}, \tilde{F}_q(\tilde{u}) \right) 
+ (E_1 v,v)\\
&= 2 \tau \left(\frac{l \sqrt{b}}{\Psi}, \p_t (\tilde{u} \tilde{f}_q(\tilde{u})) - 2 \p_t( \tilde{F}_q(\tilde{u})) \right) + 4\tau(\sqrt{b} l, \p_1 \tilde{F}_q|_{\tilde{u}} ) + 4q\tau \left(\frac{ e^{2t} e^{2\tau \varphi}\varphi' \sqrt{b}}{\Psi}, \tilde{F}_q(\tilde{u}) \right)  + (E_1 v,v)\\
&= 2 \tau \left(\frac{l \sqrt{b}}{\Psi}, \p_t (\tilde{u} \tilde{f}_q(\tilde{u})) - 2 \p_t( \tilde{F}_q(\tilde{u})) \right)+ 4q\tau \left(\frac{ e^{2t} e^{2\tau \varphi}\varphi' \sqrt{b}}{\Psi}, \tilde{F}_q(\tilde{u}) \right)  + (E_2 v,v) + (E_1 v,v),
\end{split}
\end{align}
where we have set $l(t)= e^{(2-n)t} e^{2t} \varphi'(t) e^{2 \tau \varphi}$ and where we view $(E_2 v,v)$ as a controlled error.
We note that by our choice of the weight function $\varphi$
\begin{align}
\label{eq:l}
\begin{split}
l'(t) &=  e^{(2-n)t}  e^{2\tau \varphi}\left( \left(4- n\right)\varphi'(t) + \varphi''(t) + 2\tau (\varphi'(t))^2 \right)\\
&\geq 
e^{(2-n)t}  e^{2\tau \varphi}\left(  \varphi''(t) + \frac{3}{2} \tau (\varphi'(t))^2 \right)
\geq 0,
\end{split}
\end{align}
if $\tau \geq \tau_0>0$ is sufficiently large. Integrating the expression from \eqref{eq:non_lin_comm_1} by parts and using that $q\in [1,2)$, we thus further estimate
\begin{align}
\label{eq:comm_main}
\begin{split}
-2 \tau ([\sqrt{b} h_q(\tilde{u}), \varphi' \p_t] v, v)
&= - 2\tau \left( \frac{l' \sqrt{b}}{\Psi}, \tilde{u} \tilde{f}_q(\tilde{u})- 2\tilde{F}_q(\tilde{u}) \right)
+ 4\tau q \left(\frac{ e^{2t} e^{2\tau \varphi}\varphi' \sqrt{b}}{\Psi}, \tilde{F}_q(\tilde{u}) \right) \\
& \quad + \tau \left(\frac{b'}{\Psi \sqrt{b}}l, \tilde{u}\tilde{f}_q(\tilde{u})-2 \tilde{F}_q(\tilde{u})\right) -2 \tau \left( \frac{l \sqrt{b} \Psi'}{\Psi^2}, \tilde{u} \tilde{f}_q(\tilde{u}) - 2 \tilde{F}_q(\tilde{u}) \right) \\
& \quad +  ((E_1 + E_2)v,v) \\
& \geq 2(2-q)\tau \left( \frac{l' \sqrt{b}}{\Psi}, \tilde{F}_q(\tilde{u})\right) 
+ 4\tau \left(\frac{ e^{2t} e^{2\tau \varphi}\varphi' \sqrt{b}}{\Psi}, \tilde{F}_q(\tilde{u}) \right)\\
& \quad + \left((E_1 + E_2+ E_3)v,v \right) \\
& = 2(2-q)\tau (\Psi^{-1}e^{2\tau \varphi}e^{2t}\sqrt{b}(\varphi'' + \frac{3 \tau}{2} (\varphi')^2), \tilde{F}_q(\tilde{u})) \\
& \quad + 4\tau q \left(\frac{ e^{2t} e^{2\tau \varphi}\varphi' \sqrt{b}}{\Psi}, \tilde{F}_q(\tilde{u}) \right)
+ ((E_1+E_2+E_3)v,v)\\
& \geq 2(2-q)\tau (\Psi^{-1}e^{2\tau \varphi}e^{2t}\sqrt{b}(\varphi'' + \tau (\varphi')^2), \tilde{F}_q(\tilde{u})) \\
& \quad
+ ((E_1+E_2+E_3)v,v).
 \end{split}
\end{align}
We estimate the error terms $((E_1+E_2+E_3)v,v)$ and show that they are indeed of lower order, i.e. that they can be absorbed into the positive contributions on the right hand side of \eqref{eq:combined_12a}: To this end, we observe that by the assumption (F3) and by the definition of $F_q(x,s)$ as the antiderivative of $f_q(x,s)$ (c.f. the condition (F1))
\begin{align*}
\left| \frac{\p_1 \tilde{f}_q(\tilde{u})}{\tilde{u}} \right|
\leq \kappa_1 e^{t} \left| \frac{\tilde{f}_q(\tilde{u})}{\tilde{u}} \right|
\leq \kappa_1 e^{t} \left| \frac{\tilde{F}_q(\tilde{u})}{\tilde{u}^2} \right| .
\end{align*}
Thus,
\begin{align*}
\left| \left( v, e^{2t}\varphi' \sqrt{b} \Psi^{-1} \frac{\p_1 \tilde{f}_q(\tilde{u})}{\tilde{u}} v \right) \right|
&\leq \left| \left( v, e^{2t}|\varphi'| \sqrt{b} \Psi^{-1} \left|\frac{\p_1 \tilde{f}_q(\tilde{u})}{\tilde{u}} \right| v \right) \right|
\leq \left| \left( v, e^{2t}|\varphi'| \sqrt{b} \Psi^{-1} e^t \left|\frac{\tilde{F}_q(\tilde{u})}{\tilde{u}^2} \right| v \right) \right|\\
& \leq \kappa_1 \left| (|l|\sqrt{b} \Psi^{-1}, e^{t} |\tilde{F}_q(\tilde{u})| ) \right|.
\end{align*}
Similarly, 
\begin{align*}
\left| \left( e^{2t}\varphi' \frac{b'}{\sqrt{b} \Psi}  \frac{\tilde{f}_q(\tilde{u})}{\tilde{u}}v,v \right) \right|
&\leq \left| \left( e^{2t}|\varphi'|\left| \frac{b'}{\sqrt{b} \Psi} \right| \frac{|\tilde{f}_q(\tilde{u})\tilde{u}|}{|\tilde{u}|^2}v,v \right) \right|
\leq q\left| \left( |l||\varphi'| \left| \frac{b'}{\sqrt{b} \Psi} \right|, |\tilde{F}_q(\tilde{u})|e^{2\tau \varphi} \right) \right|\\
&\leq M q \left| \left( |l||\varphi'| e^{t}, |\tilde{F}_q(\tilde{u})| \right) \right|,
\end{align*}
and, since $|\Psi'|\leq C e^{t}$,
\begin{align*}
\left| \left( e^{2t} \varphi' \sqrt{b} \frac{\Psi'}{\Psi^2} \frac{\tilde{f}_q(\tilde{u})}{\tilde{u}} v, v \right) \right|
\leq C \left| \left( e^{2t} e^{2\tau \varphi} |\varphi'| \sqrt{b} \left|\frac{\Psi'}{\Psi^2}\right| \frac{\tilde{u}\tilde{f}_q(\tilde{u})}{\tilde{u}^2} v, v \right) \right|
\leq  C \left| \left( |l||\varphi'| \sqrt{b} e^{t},\tilde{F}_q(\tilde{u}) \right) \right|.
\end{align*}
As a consequence,
\begin{align}
\label{eq:error_1}
\begin{split}
|(v, E_1 v)| 
&\leq  2\tau \left|\left(e^{2t} v, \varphi' \sqrt{b} \frac{\p_1 \tilde{f}_q(\tilde{u})}{\tilde{u}} v \right) \right|
+ \tau \left| \left( e^{2t}\varphi' \frac{b'}{\sqrt{b}} \frac{\tilde{f}_q(\tilde{u})}{\tilde{u}} v, v \right) \right|+ 2\tau \left| \left( e^{2t}\varphi' \sqrt{b} \frac{\Psi'}{\Psi^2} \frac{\tilde{f}_q(\tilde{u})}{\tilde{u}} v, v \right) \right|\\
&\leq C(q,M,\kappa_1) \tau \left| \left( |l||\varphi'| e^{t}, \tilde{F}_q(\tilde{u}) \right) \right|.
\end{split}
\end{align}
With a similar reasoning we infer 
\begin{align}
\label{eq:error_2}
\begin{split}
&\left| \left(E_2 v, v \right) \right|
= 4\tau \left| \left( \sqrt{b} l, \p_1 \tilde{F}_q(\tilde{u}) \right) \right|
\leq 4 \tau \kappa_1 \left| \left( \sqrt{b} |l|, e^{t}\tilde{F}_q(\tilde{u})  \right) \right|.\\
&\left| \left( E_3 v,v \right) \right|
\leq \tau \left| \left( \frac{|b'|}{|\sqrt{b}|}|l| + 2 \frac{|l| \sqrt{b}|\Psi'|}{\Psi^2} , \tilde{u} \tilde{f}_q(\tilde{u})-2 \tilde{F}_q(\tilde{u}) \right) \right|
\leq (2-q)\tau \left| \left( \left| \frac{b'}{\sqrt{b}} \right||l| + 2 \frac{|l| \sqrt{b}|\Psi'|}{\Psi^2}, \tilde{F}_q(\tilde{u}) \right) \right|\\
&\qquad \qquad \leq  C(b)(2-q)\tau \left| \left( \sqrt{b} e^{t}|l|, \tilde{F}_q(\tilde{u}) \right) \right|.
\end{split}
\end{align}
Choosing $ r_0(q)>0$ (and thus also $t_0 = \ln(r_0)<0$) sufficiently small, we may hence absorb the error contributions from \eqref{eq:error_1}, \eqref{eq:error_2} into the positive term on the right hand side of \eqref{eq:comm_main}.
Using that by our assumptions $F_q(\tilde{u})\geq c |\tilde{u}|^{q}$ and choosing $\tau \geq \tau_0=\tau_0(q,n)>0$ sufficiently large, we then deduce that
\begin{align}
\label{eq:main_comm_2}
\begin{split}
-2\tau ([ h_q(\tilde{u}), \varphi' \p_t] v, v ) 
&\geq 2\tau \frac{2-q}{q} \int\limits_{\R \times S^{n-1}} e^{2t}\sqrt{b}\left(  \varphi''(t) + \tau (\varphi'(t))^2 \right) e^{2\tau \varphi} F_q(\tilde{u}) dt d\theta\\
& \qquad+ ((E_1+E_2+E_3)v,v)\\
& \geq  \tau \frac{2-q}{q} \int\limits_{\R \times S^{n-1}} e^{2t} \sqrt{b} \left(  \varphi''(t) + \tau (\varphi'(t))^2 \right) e^{2\tau \varphi} F_q(\tilde{u}) dt d\theta\\
& \geq  \tau \frac{2-q}{q} \int\limits_{\R \times S^{n-1}} e^{2t}\sqrt{b} \left(  \varphi''(t) + \tau (\varphi'(t))^2 \right)\max\{\kappa_2|\tilde{u}|^{q-2} v^2, e^{2\tau \varphi} F_q(\tilde{u}) \} dt d\theta.
\end{split}
\end{align}
After passing back to Cartesian coordinates, this concludes the argument for the derivation of the sublinear contribution.\\

\emph{Step 4: Upgrading the gradient estimate.} We explain the derivation of the full gradient estimates.
As in the corresponding estimate in Section \ref{sec:model}, this is based on the symmetric part of the operator. Testing it by $\tau c_0 \varphi'' v$ for a sufficiently small constant $c_0>0$, we infer
\begin{align}
\label{eq:sph_grad}
\begin{split}
c_0 \tau \||\varphi''|^{1/2} \nabla_{S^{n-1}} v\|_{L^2_{\vol}}^2
&\leq C c_0 \tau \left[|(Sv, \varphi'' v)_{L^2_{\vol}}|
+ \||\varphi''|^{1/2} \p_t v\|_{L^2_{\vol}}^2 \right.\\
& \quad \left. + C\tau^2 |(|\varphi''| |\varphi'|^2 v,v)_{L^2_{\vol}}|
+ |(h_q(\tilde{u})v,\varphi'' v)_{L^2_{\vol}}| \right]\\
& \leq \frac{1}{2}\|S v\|_{L^2_{\vol}}^2 + Cc_0 \tau^3 \||\varphi''|^{1/2} v\|_{L^2_{\vol}}^2 + C c_0 \tau \|e^t |\varphi''|^{1/2} e^{\tau \varphi} |F_q(\tilde{u})|^{1/2}\|_{L^2_{\vol}}^2 \\
&\stackrel{\eqref{eq:main_comm_2}, \eqref{eq:combined_12a}}{\leq}  \|S v\|_{L^2_{\vol}}^2 + ([S,A]v,v)_{L^2_{\vol}} + C\tau|(\varphi' \sqrt{b}^{-1}\p_t(b \sqrt{b}) \nabla_{\theta}v, \nabla_{\theta} v)_{L^2_{\vol}}|\\
&\leq C \|L v\|_{L^2_{\vol}}^2 + C\tau|(\varphi' \sqrt{b}^{-1}\p_t(b \sqrt{b}) \nabla_{\theta}v, \nabla_{\theta} v)_{L^2_{\vol}}|.
\end{split}
\end{align} 
Here $c_0>0$ is chosen so small that $Cc_0 \leq 1$ and where we used that 
\begin{align*}
&C c_0 \tau \||\varphi''|^{1/2} e^t e^{\tau \varphi} |F_q(\tilde{u})|^{1/2}\|_{L^2_{\vol}}^2\\
&\leq \tau \frac{2-q}{q} \int\limits_{\R \times S^{n-1}}\sqrt{b}e^t \left(  \varphi''(t) + \tau (\varphi'(t))^2 \right)\max\{\kappa_2|\tilde{u}|^{q-2} v^2, e^{2\tau \varphi} F_q(\tilde{u}) \} dt d\theta.
\end{align*}
if the support of $v$ is chosen sufficiently small and $\tau\geq \tau_0(n,q)$ is chosen sufficiently large (depending on $q$). 
Since 
\begin{align*}
|\p_t (\sqrt{b} b)| \leq c e^{t}|\sqrt{b}|,
\end{align*}
we may absorb the contribution $C\tau|(\varphi' \sqrt{b}^{-1}\p_t(b \sqrt{b}) \nabla_{\theta}v, \nabla_{\theta}v)_{L^2_{\vol}}|$ from the right hand side of \eqref{eq:sph_grad} into the left hand side of \eqref{eq:sph_grad} if $r_0>0$ is chosen appropriately small. Thus, we obtain
\begin{align*}
c_0 \tau \||\varphi''|^{1/2} \nabla_{S^{n-1}} v\|_{L^2_{\vol}}^2
&\leq C \|L v\|_{L^2_{\vol}}^2 .
\end{align*} 
\\

\emph{Step 5: Absorbing the error terms.}
Up to now we have proved the estimate
\begin{align}
\label{eq:intermediate}
\begin{split}
&\tau^{1/2} \||\varphi''|^{1/2} \nabla v\|_{L^2_{\vol}}
+ \tau^{3/2} \||\varphi''|^{1/2} v\|_{L^2_{\vol}} 
+ \tau \|e^t |\tilde{u}|^{\frac{q-2}{2}} v\|_{L^2_{\vol}}\\
&\leq C (\|L v\|_{L^2_{\vol}} + \|Ev\|_{L^2_{\vol}} ).
\end{split}
\end{align}
It hence remains to deal with the error contribution on the right hand side of \eqref{eq:intermediate}. Using the Lipschitz continuity of $a,b$ and $\Psi$ we can estimate
\begin{align*}
\|Ev\|_{L^2_{\vol}}= \tau \||\varphi' \p_t \ln(\sqrt{b})| v \|_{L^2_{\vol}}
\leq C \tau \|e^{t}  v\|_{L^2_{\vol}}.
\end{align*}
As before this can be absorbed into the left hand side of \eqref{eq:intermediate} (after possibly choosing $r_0>0$ even smaller and $\tau_0$ even larger). Returning to Cartesian coordinates, then concludes the proof of the Carleman estimate for the operator $Lu=\D_{a^{ij}} u + \frac{\hat{f}_q(u)}{\Psi}$. Using the equivalence from Lemma \ref{lem:geo}, we then also infer a Carleman estimate for the operator $\p_i a^{ij}\p_j u + \hat{f}_q(u)$. This involves lower order errors of the type $R$ from \eqref{eq:LB}, but as outlined in the previous error estimates, these can be absorbed into the left hand side of the Carleman estimate. Hence, we arrive at the desired result of Theorem \ref{prop:varmet}.
\end{proof}

\bibliographystyle{alpha}
\bibliography{citationsHT}

\end{document}